\documentclass[11pt]{amsart}

\usepackage{amsfonts}
\usepackage{amssymb}
\usepackage{amsmath}
\usepackage{amsthm}
\usepackage{stmaryrd} 
\usepackage{wasysym} 
\usepackage[colorlinks,allcolors=magenta]{hyperref}
\usepackage{tikz}

\usepackage[all]{xy}
\usepackage{enumitem}

\setenumerate{topsep=0pt,labelwidth=0pt,align=left,labelindent=0pt,labelsep=0in,labelwidth=0.275in,leftmargin=0.275in,label=\rm(\alph*),parsep=0pt,itemsep=3pt}

\renewcommand{\phi}{\varphi}

\renewcommand{\bar}{\overline}
\newcommand{\restr}{\upharpoonright}
\newcommand{\forces}{\mathrel{\Vdash}}

\newcommand{\Q}{\mathbb{Q}}
\newcommand{\R}{\mathbb{R}}
\newcommand{\C}{\mathbb{C}}

\newcommand{\B}{\mathbb{B}}
\newcommand{\A}{\mathbb{A}}

\renewcommand{\P}{\mathbb{P}}

\newcommand{\LM}{\mathcal{M}}

\newcommand{\LF}{\mathcal{F}}
\newcommand{\LP}{\mathcal{P}}
\newcommand{\LC}{\mathcal{C}}

\newcommand{\LA}{\mathcal{A}}
\newcommand{\LB}{\mathcal{B}}

\newcommand{\LD}{\mathcal{D}}
\newcommand{\LG}{\mathcal{G}}

\newcommand{\p}{\mathfrak{p}}

\renewcommand{\b}{\mathfrak{b}}
\renewcommand{\d}{\mathfrak{d}}
\renewcommand{\a}{\mathfrak{a}}
\renewcommand{\c}{\mathfrak{c}}

\DeclareMathOperator{\dom}{dom}

\DeclareMathOperator{\ran}{ran}

\newtheorem{thm}{Theorem}[section]
\newtheorem{prop}[thm]{Proposition}
\newtheorem{lemma}[thm]{Lemma}
\newtheorem{cor}[thm]{Corollary}

\newtheorem*{ques}{Question}
\newtheorem*{conj}{Conjecture}

\theoremstyle{definition}
\newtheorem{defn}[thm]{Definition}

\theoremstyle{remark}

\newtheorem*{claim}{Claim}

\newcommand{\LH}{\mathcal{H}}

\newcommand{\supp}{\mathrm{supp}}
\newcommand{\cf}{\mathrm{cf}}

\newcommand{\concat}{^\smallfrown}

\newcommand{\ZFC}{\mathsf{ZFC}}
\newcommand{\CH}{\mathsf{CH}}
\newcommand{\MA}{\mathsf{MA}}

\newcommand{\bb}{\mathrm{bb}}

\renewcommand{\P}{\mathbb{P}}

\newcommand{\V}{\mathbf{V}}
\renewcommand{\L}{\mathbf{L}}

\newcommand{\FIN}{\mathrm{FIN}}

\newcommand{\cc}{\mathfrak{c}}

\newcommand{\FU}{\mathrm{FU}}

\title{Madness in vector spaces}
\date{August 20, 2019}
\author{Iian B. Smythe}
\address{Department of Mathematics, Rutgers, The State University of New Jersey, 110 Frelinghuysen Road, Piscataway, NJ, 08854}
\email{i.smythe@rutgers.edu}

\subjclass[2010]{Primary 03E17, 03E15; Secondary 15A03}
\keywords{mad families, infinite-dimensional, vector spaces, block sequences, cardinal invariants, forcing, definability, Ramsey theory}
\thanks{The author would like to thank the anonymous referee for many helpful comments and corrections.}

\begin{document}

\begin{abstract}
	We consider maximal almost disjoint families of block subspaces of countable vector spaces, focusing on questions of their size and definability. We prove that the minimum infinite cardinality of such a family cannot be decided in $\ZFC$ and that the ``spectrum'' of cardinalities of mad families of subspaces can be made arbitrarily large, in analogy to results for mad families on $\omega$. We apply the author's local Ramsey theory for vector spaces \cite{MR3864398} to give partial results concerning their definability.
\end{abstract}

\maketitle

\section{Introduction}\label{sec:Intro}

Recall that two infinite subsets $x$ and $y$ of the natural numbers $\omega$ are \emph{almost disjoint} if $x\cap y$ is finite. A collection $\LA\subseteq[\omega]^\omega$, where $[\omega]^\omega$ is the set of infinite subsets of $\omega$, is an \emph{almost disjoint family} if its elements are pairwise almost disjoint, and is a \emph{maximal almost disjoint family}, or \emph{mad family}, if it is not properly contained in another such family. While any finite (almost) partition of $\omega$ forms a mad family, our focus here is confined to infinite mad families.

It is well-known that every almost disjoint family is contained in a mad family and every infinite mad family is uncountable. The former is an application of Zorn's Lemma, while the later a straightforward diagonalization.

A large almost disjoint family can be obtained as follows: Identifying $\omega$ with $2^{<\omega}$, consider
\begin{align}\label{ex:perfect_ad}
	\LA=\{\{x\restr n:n\in\omega\}:x\in 2^\omega\}.
\end{align}
It is easy to see that $\LA$ is almost disjoint and of size $\cc$, thus can be extended to a mad family of size $\cc$. Note that $\LA$ is (topologically) closed as it is a homeomorphic image of $2^\omega$. Here, we identify $[\omega]^\omega$ as a subspace of $2^\omega$ via characteristic functions, from which it inherits a Polish topology.

Two fundamental questions about infinite mad families one might ask are:
\begin{enumerate}[label=\textup{\arabic*.}]
	\item How big (or small) can they be?
	\item How definable can they be?	
\end{enumerate}

One way of addressing question 1 is to determine the value of the cardinal invariant
\[
	\a = \min\{|\LA|:\LA\text{ is an infinite mad family}\}.
\]
This could mean which $\aleph_\alpha$ is such that $\a=\aleph_\alpha$, or how $\a$ relates to other well-studied cardinal invariants (see \cite{MR2768685} or \cite{MR776622}) between $\aleph_1$ and $\cc$. By our comments above, $\aleph_1\leq\a\leq\cc$, and a modification of that diagonalization argument shows that $\b\leq\a$, where $\b$ is the minimum size of an unbounded family of functions $\omega\to\omega$ (see [ibid.]). However, the value of $\a$ cannot be decided in $\ZFC$: both the Continuum Hypothesis $\CH$ and Martin's Axiom $\MA$ (see \cite{MR597342} or \cite{MR0270904}) imply that $\a=\cc$, and thus, consistently $\aleph_1<\a=\cc$, while Kunen \cite{MR597342} showed that in the model obtained by adding $\aleph_2$-many Cohen reals to a model of $\CH$, $\aleph_1=\a<\cc=\aleph_2$. In \cite{MR1900391}, Hru\v{s}\'ak showed\footnote{Given the comments in \cite{MR1900391}, this result was likely known earlier.} that the latter also holds in the models obtained by adding $\aleph_2$-many Sacks reals iteratively or ``side-by-side'' to a model of $\CH$. 

A more sophisticated version of question 1 might ask for the ``spectrum'' of cardinalities between $\aleph_1$ and $\cc$ that mad families can posses. This was first addressed by Hechler \cite{MR0307913}, who produced a method for obtaining arbitrarily large continuum and, simultaneously, mad families of all cardinalities $\kappa$ for $\aleph_1\leq\kappa\leq\cc$. While beyond the scope of our investigations here, these questions have been the focus of much deep work in recent decades, notably Brendle's \cite{MR1975392}, which establishes the consistency of $\a=\aleph_\omega$, Shelah's \cite{MR2096454}, which establishes the consistency of $\d<\a$, and Shelah and Spinas' \cite{MR3395354}, which gives a nearly-sharp characterization of possible mad spectra.

Question 2 above seeks to understand to what extent the nonconstructive methods used to obtain mad families are necessary. A result of Mathias \cite{MR0491197} says that an infinite mad family can never be analytic (i.e., a continuous image of a Borel set). Under large cardinal hypotheses, this can be pushed further to show that there are no definable mad families at all, in the sense that there are none in $\L(\R)$ (see \cite{MR1644345}, \cite{MR0491197}, \cite{MR3787540}, and for a consistency result without large cardinals, \cite{Horowitz_Shelah_no_mad}). Mathias' result is also sharp; Miller \cite{MR983001} proved that there is a coanalytic (i.e., the complement of an analytic set) mad family assuming $\V=\L$, work later refined by T\"ornquist \cite{MR3156517}.
 
This article is concerned with an analogue of mad families arising in vector spaces. Throughout, $E$ will be a countably infinite-dimensional vector space over a countable (possibly finite) field $F$.

\begin{defn}
	We say that two infinite-dimensional subspaces $X$ and $Y$ of $E$ are \emph{almost disjoint} if $X\cap Y$ is finite-dimensional.	
\end{defn}

Due to their more tractable nature, we will focus on \emph{block subspaces}, that is, those having a basis in ``block position'' with respect to a fixed basis for $E$ (see \S\ref{sec:card_ZFC} for the definition). Every infinite-dimensional subspace contains a block subspace, so this is a relatively mild restriction.

\begin{defn}
	A collection $\LA$ of infinite-dimensional (block) subspaces of $E$ is an \emph{almost disjoint family of (block) subspaces} if its elements are pairwise almost disjoint and is a \emph{maximal almost disjoint family of (block) subspaces}, or \emph{mad family of (block) subspaces}, if it is not properly contained in another such family.
\end{defn}

Note that, by our prior comment, being maximal with respect to arbitrary subspaces is equivalent to being maximal with respect to block subspaces.

While the topic of almost disjoint families of subspaces seems very natural, it appears to have been little studied except for a paper by Kolman \cite{MR1756262}, wherein they are called ``almost disjoint packings'',\footnote{Several proofs in \cite{MR1756262} appear to use a stronger property than almost disjointness, namely that whenever $X_0,\ldots,X_n\in\LA$ are distinct, then $X_i\cap(\sum_{j\neq i}X_j)$ is finite-dimensional. It easy to construct almost disjoint families of subspaces for which this fails, e.g., $X_0=\langle(e_{2n})_{n\in\omega}\rangle$, $X_1=\langle(e_{2n+1})_{n\in\omega}\rangle$, and $X_2=\langle(e_{2n}+e_{2n+1})_{n\in\omega}\rangle$, where $(e_n)$ is a basis for $E$. This can be extended to an infinite almost disjoint family of subspaces by our Proposition \ref{prop:mad_unctbl}. As such, we reprove several of the results appearing in \cite{MR1756262}.} and indirectly in the recent work of Brendle and Garc\'{i}a \'{A}vila \cite{MR3685044} discussed below.

In light of the above questions for mad families on $\omega$, we ask the analogous questions for infinite mad families of subspaces:

\begin{enumerate}[label=\textup{\arabic*.}]
	\item How big (or small) can they be? In particular, what is
	\[
		\a_{\mathrm{vec},F}=\min\{|\LA|:\LA\text{ is an infinite mad family of block subspaces}\}?
	\]
	\item How definable can they be?	
\end{enumerate}

Two related notions have been studied for separable Hilbert spaces, that of ``almost orthogonal'' and ``almost disjoint'' families of closed infinite-dimensional subspaces, where ``almost'' is measured by considering the corresponding projection operators modulo the compact operators. Results concerning question 1 in these settings were obtained in papers of Wofsey \cite{MR2358514} and Bice \cite{MR2847547}, respectively. While not directly related\footnote{Almost orthogonal families of closed subspaces of Hilbert space appear more closely related to almost disjoint families on $\omega$ than does our setting. For instance, countable almost orthogonal families arise as images of countable almost disjoint families on $\omega$ via the ``diagonal map'' (cf.~Lemma 5.34 in \cite{FarahAST}), and, consistently, some mad families on $\omega$ remain maximal when passed through this map \cite{MR2358514}. Less is understood about the notion of almost disjointness for closed subspaces, e.g., it appears to be open whether the corresponding cardinal invariant is $\aleph_1$ in $\ZFC$.} to our setting, these papers provide both motivation for, and ideas used in, the results in \S\ref{sec:card_cons} below.

When $F$ is the finite field of order $2$, vectors may be identified with elements of $\FIN$, the set of nonempty finite subsets of $\omega$, via their supports. Sums of vectors in block position correspond to unions of the corresponding supports. This is the setting of Hindman's Theorem \cite{MR0349574} on disjoint unions of finite subsets of $\omega$.  During the preparation of this article, an independent work of Brendle and Garc\'{i}a \'{A}vila \cite{MR3685044} appeared on maximal almost disjoint families of combinatorial subspaces of $\FIN$. Among other results, they show that $\mathrm{non}(\LM)\leq\a_{\FIN}$, where $\mathrm{non}(\LM)$ is the minimum size of a nonmeager subset of $\R$ and $\a_{\FIN}$ is the minimum size of an infinite mad family in $\FIN$, or in our language, a mad family of block subspaces when $|F|=2$. Together with known results, this shows the consistency of $\a<\a_{\FIN}$. 


This article is organized as follows: In \S\ref{sec:card_ZFC}, we consider issues of cardinality and address question 1 using only $\ZFC$ techniques, showing that mad families of block subspaces of cardinality $\geq 2$ are always uncountable (Proposition \ref{prop:mad_unctbl}) and that $\b\leq \a_{\mathrm{vec},F}$ (Proposition \ref{prop:bleqa}). We then adapt the aforementioned work of Brendle and Garc\'{i}a \'{A}vila to show, moreover, that $\mathrm{non}(\LM)\leq\a_{\mathrm{vec},F}$, for general $F$ (Corollary \ref{cor:nonMleqa_vec}). In \S\ref{sec:card_cons}, we use forcing to establish consistency results regarding $\a_{\mathrm{vec},F}$ in analogy to those mentioned above for $\a$, showing that is equal to $\aleph_1$ in the Cohen (Theorem \ref{thm:cohen}) and Sacks (Theorem \ref{thm:sacks}) models, and that the spectrum of cardinalities of mad families of block subspaces can be made arbitrarily large (Theorem \ref{thm:hechler}). In \S\ref{sec:def}, we consider issues of definability. We use the Ramsey-theoretic results from the author's \cite{MR3864398} to give a partial solution for ``full'' mad families of subspaces (Theorem \ref{thm:no_full_mad}). The existence of such families is established under certain set-theoretic hypotheses (Theorem \ref{thm:full_mad}). \S\ref{sec:def} can be read independently from the other sections. We conclude in \S\ref{sec:fin} with further remarks, conjectures, and open questions.

\section{Cardinality: $\ZFC$ results}\label{sec:card_ZFC}

Throughout, we fix $(e_n)$ an $F$-basis for $E$ (e.g., $E=\bigoplus_{n\in\omega} F$ and $e_n$ is the $n$th unit coordinate vector). If $X$ is a subset of $E$, or a sequence of vectors in $E$, we write $\langle X\rangle$ for its linear span. 

For $x\in E$, the \emph{support} of $x$ is given by
\[
	\supp(x) = \{n\in\omega:x=\sum a_ie_i \Rightarrow a_n\neq 0\}.
\]
For nonzero vectors $x,y\in E$ and $M\in\omega$, we write $x>M$ if $\min(\supp(x))>M$, and $x<y$ if $\max(\supp(x))<\min(\supp(y))$. We call a sequence of nonzero vectors $(x_n)$ a \emph{block sequence} if $x_n<x_{n+1}$ for all $n$. A space spanned by a block sequence is a \emph{block subspace}. 

As mentioned in \S\ref{sec:Intro}, every infinite-dimensional subspace of $E$ contains a block subspace (Lemma 2.1 in \cite{MR3864398}), and the block sequence forming the basis of a block subspace is unique, up to scaling. Unless otherwise specified, a block subspace is always assumed to be infinite-dimensional.

We begin with the following easy facts:

\begin{prop}
	Every almost disjoint family of (block) subspaces is contained in a mad family of (block) subspaces.	
\end{prop}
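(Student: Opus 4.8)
The plan is to apply Zorn's Lemma to the partially ordered set of almost disjoint families of subspaces that contain the given family, ordered by inclusion. This is the exact analogue of the classical argument for almost disjoint families on $\omega$, so I expect it to go through with essentially no difficulty. Let $\LA_0$ be an arbitrary almost disjoint family of subspaces of $E$. Consider the collection
\[
	\mathbb{P}=\{\LA:\LA\text{ is an almost disjoint family of subspaces and }\LA_0\subseteq\LA\},
\]
partially ordered by $\subseteq$. Note $\mathbb{P}\neq\emptyset$ since $\LA_0\in\mathbb{P}$.

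The key verification is that $\mathbb{P}$ satisfies the hypotheses of Zorn's Lemma, i.e., that every chain has an upper bound. Given a chain $\LC\subseteq\mathbb{P}$, I would take $\LA^*=\bigcup\LC$ and check that $\LA^*\in\mathbb{P}$. Clearly $\LA_0\subseteq\LA^*$, so the only thing to verify is that $\LA^*$ is an almost disjoint family, i.e., that its elements are pairwise almost disjoint. This is where the chain condition does its work: given distinct $X,Y\in\LA^*$, there are $\LA,\LA'\in\LC$ with $X\in\LA$ and $Y\in\LA'$, and since $\LC$ is a chain one of these, say $\LA$, contains the other, so $X,Y\in\LA$. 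As $\LA$ is an almost disjoint family, $X\cap Y$ is finite-dimensional, as required. Hence $\LA^*$ is an upper bound for $\LC$ in $\mathbb{P}$.

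By Zorn's Lemma, $\mathbb{P}$ has a maximal element $\LA$. It remains to observe that such an $\LA$ is a mad family of subspaces, i.e., maximal among \emph{all} almost disjoint families of subspaces, not merely among those extending $\LA_0$. If $\LA$ were properly contained in some almost disjoint family $\LB$ of subspaces, then $\LB$ would still contain $\LA_0$ (as $\LA_0\subseteq\LA\subseteq\LB$), so $\LB\in\mathbb{P}$, contradicting the maximality of $\LA$ in $\mathbb{P}$. Thus $\LA$ is a mad family of subspaces containing $\LA_0$, completing the proof.

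I do not anticipate any genuine obstacle here; the statement is a routine Zorn's Lemma application and the only point requiring a line of care is confirming that the union of a chain of almost disjoint families is again almost disjoint, which follows immediately from the chain hypothesis as above. The substance of the theory lies not in this proposition but in the later results on cardinality (e.g., Proposition \ref{prop:mad_unctbl}) and definability.
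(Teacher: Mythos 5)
Your proof is correct and is precisely the standard Zorn's Lemma argument that the paper invokes (the paper simply cites it as ``a standard Zorn's Lemma argument'' without writing out the details). The one point needing care, that the union of a chain of almost disjoint families is almost disjoint, is handled correctly.
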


\begin{proof}
	This is a standard Zorn's Lemma argument.
\end{proof}

\begin{prop}
	There is an almost disjoint family of block subspaces, and thus a mad family of block subspaces, of size $\cc$.	
\end{prop}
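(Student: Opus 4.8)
The plan is to transport the perfect almost disjoint family on $\omega$ from \eqref{ex:perfect_ad} into $E$ through the fixed basis $(e_n)$. Concretely, let $\LA_0=\{A_x:x\in 2^\omega\}\subseteq[\omega]^\omega$ be an almost disjoint family of size $\cc$, such as the one in \eqref{ex:perfect_ad} (after identifying the index set $\omega$ with $2^{<\omega}$). For each $x\in 2^\omega$, set $X_x=\langle e_n:n\in A_x\rangle$, the subspace spanned by those basis vectors whose index lies in $A_x$. Since each $A_x$ is infinite, each $X_x$ is an infinite-dimensional subspace, and I would then claim that $\{X_x:x\in 2^\omega\}$ is the desired almost disjoint family of subspaces.

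The key step is to verify almost disjointness, and this is where the choice of \emph{basis-indexed} subspaces does the work. For arbitrary index sets $A,B\subseteq\omega$, I claim that $\langle e_n:n\in A\rangle\cap\langle e_n:n\in B\rangle=\langle e_n:n\in A\cap B\rangle$. The inclusion $\supseteq$ is immediate, and for $\subseteq$, any vector in the intersection has a unique expansion in the basis $(e_n)$, whose support must lie in $A$ (as the vector is in the first span) and in $B$ (as it is in the second), hence in $A\cap B$. Consequently $\dim(X_x\cap X_y)=|A_x\cap A_y|<\infty$ whenever $x\neq y$, using that $\LA_0$ is almost disjoint. The assignment $x\mapsto X_x$ is injective, since a subset of a basis is recovered from its span, so the family indeed has size $\cc$.

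Finally, there are only $\cc$-many subsets of the countable set $E$, hence at most $\cc$-many subspaces, so every family of subspaces has size at most $\cc$. By the preceding proposition, $\{X_x:x\in 2^\omega\}$ extends to a mad family of subspaces, which then has size exactly $\cc$.

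I do not anticipate a genuine obstacle here: the only point requiring care is that the argument must use subspaces each \emph{spanned by basis vectors}, since it is precisely the unique-support property of the basis that converts a finite intersection of index sets into a finite-dimensional intersection of subspaces. A naive attempt with arbitrary subspaces whose supports are almost disjoint as sets would fail, as linear combinations can produce cancellation across the two subspaces; restricting to basis-indexed subspaces sidesteps this entirely.
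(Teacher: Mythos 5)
Your proposal is correct and follows exactly the paper's argument: push the perfect almost disjoint family from \eqref{ex:perfect_ad} through the ``diagonal map'' $x\mapsto\langle(e_n)_{n\in x}\rangle$ and observe that spans of basis vectors intersect in the span of the common indices. The paper leaves the verification of almost disjointness as ``easily seen,'' which is precisely the support argument you spell out.
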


\begin{proof}
	Let $\LA$ be an almost disjoint family on $\omega$ of size $\cc$, as in (\ref{ex:perfect_ad}) above. The image of $\LA$ under the injective map $x\mapsto\langle(e_n)_{n\in x}\rangle$ is easily seen to be an almost disjoint family of subspaces.
\end{proof}


%


Given an infinite-dimensional subspace $Y$ and an $M\in\omega$, we write $Y/M$ for the set of $y\in Y$ with $y>M$; $Y/M$ is always an infinite-dimensional subspace of $Y$. Given a vector $x$, we write $Y/x$ for $Y/\max(\supp(x))$. The following lemma will be key to much of what follows.

\begin{lemma}\label{lem:extend}
	Let $Y$ be a block\footnote{In earlier versions of this paper, including in \cite{Smythe_thesis}, this lemma was stated for \emph{arbitrary} infinite-dimensional subspaces $Y$, rather than block subspaces. This was in error, as the following counterexample shows: Let $Y=\langle e_0+e_2,e_1+e_3,\ldots,e_{2n}+e_{2n+2},e_{2n+1}+e_{2n+3},\ldots\rangle$. Note that $Y$ contains either $e_0+e_{2n+2}$ or $e_0-e_{2n+2}$, for each $n$. Thus, if we take $x_0=e_0$, then for any $M$, we can find an $n$ with $2n+2>M$ and so $e_0\in\langle e_0,e_{2n+2}\rangle\cap Y\neq\{0\}=\langle e_0\rangle\cap Y$. This is related to the fact that the basis for $Y$ cannot put in a ``row reduced echelon form'', and appears to be the essential difficulty in removing ``block'' from many of the arguments herein.} subspace of $E$ and $x_0,\ldots,x_m$ nonzero vectors in $E$. 
	\begin{enumerate}[label=\textup{(\alph*)}]
		\item If $x\in Y$, then
		\[
			\langle x_0,\ldots,x_m,x\rangle\cap Y=\langle x_0,\ldots,x_m\rangle\cap Y+\langle x\rangle.
		\]	
		\item There is an $M\in\omega$ (that depends only on $Y$ and $\max(\bigcup_{i=0}^m\supp(x_i))$) such that whenever $x>M$ and $x\notin Y$,
		\[
			\langle x_0,\ldots,x_m,x\rangle\cap Y=\langle x_0,\ldots,x_m\rangle\cap Y.
		\]
	\end{enumerate}
\end{lemma}

\begin{proof}
	(a) is just the special case of the modularity law for subspaces (and holds for arbitrary $Y$):
	\[
		Z\subseteq Y \quad\text{implies}\quad (X+Z)\cap Y=(X\cap Y)+Z,
	\]
	where $X=\langle x_0,\ldots,x_m\rangle$ and $Z=\langle x\rangle$.
	
	(b) Suppose that $Y=\langle (y_n)\rangle$, where $(y_n)$ is a block sequence. Put $K=\max(\bigcup_{i=0}^m\supp(x_i))$ and let $N$ be the largest index such that $\supp(y_N)\cap[0,K]\neq\emptyset$. Set	
	\[
		M=\max\left\{\max(\supp(y_N)),K\right\}.
	\]
	Take $x>M$ with $x\notin Y$.  Suppose that
	\[
		v=\lambda_0x_0+\cdots+\lambda_m x_m+\lambda x\in Y,
	\]
	with $\lambda_i$'s not all $0$. To prove the result, it suffices to show that $\lambda=0$. Towards a contradiction, suppose that $\lambda\neq 0$ and write
	\[
		\alpha_0y_0+\cdots+\alpha_k y_k=\lambda_0x_0+\cdots+\lambda_m x_m+\lambda x,
	\]
	for some $k\in\omega$. Since $x>M$, we must have that $k>N$. But, by our choice of $N$ and the fact that the $y_n$ are in block position, we have that
	\[
		\alpha_0y_0+\cdots+\alpha_N y_N=\lambda_0x_0+\cdots+\lambda_m x_m,
	\]
	which implies $x=\frac{1}{\lambda}(\alpha_{N+1}y_{N+1}+\cdots+\alpha_k y_k)\in Y$, a contradiction.
%
%
\end{proof}

\begin{lemma}\label{lem:disj_subs2}
	Suppose that $Y_0,\ldots, Y_n,Y_{n+1}$ are pairwise disjoint block subspaces and $x_0,\ldots,x_m$ nonzero vectors such that $\langle x_0,\ldots,x_m\rangle\cap Y_k=\{0\}$ for $k\leq n+1$. Then, there is an $x>x_m$ such that $\langle x_0,\ldots,x_m,x\rangle\cap Y_k=\{0\}$ for $k\leq n+1$.	
\end{lemma}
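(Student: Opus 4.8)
The plan is to reduce the statement, via Lemma~\ref{lem:extend}, to a purely linear-algebraic covering fact, and then to prove that fact by a transversality argument that is sensitive to whether $F$ is finite or infinite.

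First I would apply Lemma~\ref{lem:extend} to each $Y_k$ together with the block sequence $x_0<\cdots<x_m$, obtaining thresholds whose maximum I call $M$, and set $M'=\max(M,\max(\supp(x_m)))$. For any $x>M'$ that lies in none of the $Y_k$, the ``$x\notin Y$'' clause of Lemma~\ref{lem:extend} gives $\langle x_0,\ldots,x_m,x\rangle\cap Y_k=\langle x_0,\ldots,x_m\rangle\cap Y_k=\{0\}$ for every $k\le n+1$; since such an $x$ automatically satisfies $x>x_m$, it suffices to produce a single $x\in E/M'$ avoiding $\bigcup_{k\le n+1}Y_k$.

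Next I would record that, because the family $Y_0,\ldots,Y_{n+1}$ has at least two members, almost disjointness forces every $Y_k$ to have infinite codimension: a finite-codimension $Y_k$ would meet the infinite-dimensional $Y_j$ in a subspace of finite codimension in $Y_j$, hence infinite-dimensional, contradicting disjointness. In particular each $Y_k/M'$ is a \emph{proper} subspace of $E/M'$, and the $Y_k/M'$ still have finite-dimensional pairwise intersections. So everything reduces to the claim that such a finite family cannot cover the infinite-dimensional space $E/M'$.

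Finally, to prove this covering claim. If $F$ is infinite it is standard, since an affine line has infinitely many points but meets each proper subspace in at most one of them. The main obstacle is the finite-field case, where proper subspaces genuinely \emph{can} cover (three lines cover $F^2$ when $|F|=2$), and here the finiteness of the pairwise intersections is essential. I would gather the finitely many finite-dimensional intersections $(Y_0/M')\cap(Y_k/M')$, for $k\ge 1$, inside a single finite-dimensional $F_0\subseteq Y_0/M'$, and then, using $\dim(Y_0/M')=\infty$, choose a $d$-dimensional $U\subseteq Y_0/M'$ with $U\cap F_0=\{0\}$, so that $U\cap Y_k=\{0\}$ for all $k\ge 1$. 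Picking $w\in(E/M')\setminus Y_0$ and forming the affine flat $w+U$ of size $|F|^d$, no point of $w+U$ lies in $Y_0$ (as $U\subseteq Y_0$ while $w\notin Y_0$), whereas each $Y_k$ with $k\ge 1$ contains at most one point of $w+U$ (two such points would differ by a nonzero element of $U\cap Y_k$). Choosing $d$ so large that $|F|^d>n+1$ then forces some point $x=w+u$ of $w+U$ to avoid every $Y_k$, and this $x$ is as required. The crux is exactly this transversal flat: almost disjointness is what lets $w+U$ outrun the bounded number of single points that the remaining subspaces can contribute.
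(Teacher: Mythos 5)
Your proof is correct, and its first half --- invoking Lemma~\ref{lem:extend} for each $Y_k$ to reduce the problem to finding a single $x>M$ outside $\bigcup_{k\le n+1}Y_k$ --- is exactly the paper's reduction. Where you diverge is in producing that $x$. The paper stays inside the machinery it has just built: it applies Lemma~\ref{lem:disj_subs1} repeatedly to construct a block sequence $x_0'<\cdots<x_{n+1}'$ above $M$ with $x_k'\in Y_k$ and $\langle x_0',\ldots,x_{n+1}'\rangle\cap Y_k=\langle x_k'\rangle$, and then observes that the sum $x=x_0'+\cdots+x_{n+1}'$ cannot lie in any $Y_k$ (it would have to be a scalar multiple of $x_k'$ alone, contradicting linear independence of the block vectors). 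You instead prove a covering statement directly: after noting that pairwise disjointness forces each $Y_k$ to have infinite codimension, you build a transversal affine flat $w+U$ with $U\subseteq Y_0/M'$, $w\notin Y_0$, and count --- no point lies in $Y_0$, each other $Y_k$ catches at most one point, and $|F|^d$ eventually exceeds $n+1$. Both arguments are field-uniform; the paper's sum-of-blocks trick sidesteps the finite-field covering obstruction implicitly (the coefficients of the sum are forced), while your flat argument confronts it explicitly and is self-contained modulo Lemma~\ref{lem:extend}, at the cost of the extra codimension observation (which you correctly justify) and of not reusing Lemma~\ref{lem:disj_subs1}, which the paper needs anyway for Proposition~\ref{prop:mad_unctbl}. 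One cosmetic remark: since the $Y_k$ here are assumed pairwise \emph{disjoint} rather than merely almost disjoint, your auxiliary finite-dimensional space $F_0$ is just $\{0\}$ and that step of your argument is vacuous, though harmless.
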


\begin{proof}
	By repeatedly applying Lemma \ref{lem:extend}(b), we can obtain an $M_0\geq\max(\bigcup_{i=0}^m\supp(x_m))$ such that whenever $x>M_0$ and not in any of the $Y_k$'s, $\langle x_0,\ldots,x_m,x\rangle\cap Y_k=\{0\}$ for $k\leq n+1$.
	
	To find such an $x$, start by picking $x_0'\in Y_0/M_0$, so $\langle x_0'\rangle\cap Y_0=\langle x_0'\rangle$ and $\langle x_0'\rangle\cap Y_k=\{0\}$ for $0<k\leq n+1$. By repeatedly applying Lemma \ref{lem:extend}, we can obtain an $M_1\geq M_0$ such that whenever $y\in Y_1/M_1$, we have that
	\begin{align*}
		\langle x_0',y\rangle\cap Y_0&=\langle x_0'\rangle\cap Y_0=\langle x_0'\rangle\\
		\langle x_0',y\rangle\cap Y_1&=\langle x_0'\rangle \cap Y_1+\langle y\rangle=\langle y\rangle\\
		\langle x_0',y\rangle\cap Y_k&=\langle x_0'\rangle\cap Y_k=\{0\} \quad\text{for $1<k\leq n+1$}.
	\end{align*}
	Pick $x_1'\in Y_1/M_1$. Continue in this fashion, using Lemma \ref{lem:extend} to choose an $M_\ell\geq M_{\ell-1}$ and $x_\ell'\in Y_\ell/M_\ell$, for $1\leq\ell\leq n+1$, so that
	\begin{align*}
		\langle x_0',\ldots,x_{\ell-1}',x_\ell'\rangle\cap Y_0&=\langle x_0',\ldots,x_{\ell-1}'\rangle\cap Y_0=\langle x_0'\rangle\\	
		\vdots \\
		\langle x_0',\ldots,x_{\ell-1}',x_\ell'\rangle\cap Y_{\ell-1}&=\langle x_0',\ldots,x_{\ell-1}'\rangle\cap Y_{\ell-1}=\langle x_{\ell-1}'\rangle\\
		\langle x_0',\ldots,x_{\ell-1}',x_\ell'\rangle\cap Y_\ell &=\langle x_0',\ldots,x_{\ell-1}'\rangle \cap Y_\ell+\langle x_\ell'\rangle =\langle x_\ell'\rangle\\
		\langle x_0',\ldots,x_{\ell-1}',x_\ell'\rangle\cap Y_k&=\langle x_0',\ldots,x_{\ell-1}'\rangle\cap Y_k=\{0\} \quad\text{for $\ell<k\leq n+1$}.
	\end{align*}
	Then, $x=x_0'+\cdots+x_{n+1}'$ is not in any of the $Y_k$'s, and so $\langle x_0,\ldots,x_m,x\rangle\cap Y_k=\{0\}$ for $k\leq n+1$.
\end{proof}

If $X$ is a finite-codimensional subspace, then $\{X\}$ is always a mad family of subspaces. These are the only countable mad families of block subspaces.

\begin{prop}\label{prop:mad_unctbl}
	Let $\LA$ be a maximal almost disjoint family of block subspaces of size $\geq 2$. Then, $\LA$ is uncountable.
\end{prop}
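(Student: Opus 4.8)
The plan is to prove the contrapositive: assuming $\LA$ is a \emph{countable} almost disjoint family of subspaces with $|\LA|\geq 2$, I would construct an infinite-dimensional subspace $Z$ that is almost disjoint from every member of $\LA$. Since $Z$ is infinite-dimensional, it cannot equal any $Y\in\LA$ (otherwise $Z\cap Y=Z$ would be infinite-dimensional), so $\LA\cup\{Z\}$ would be a strictly larger almost disjoint family, contradicting maximality. Enumerate $\LA=\{Y_i:i\in\omega\}$, repeating entries if $\LA$ happens to be finite.

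The first observation, and the only place the hypothesis $|\LA|\geq 2$ enters, is that every member of $\LA$ has infinite codimension in $E$. Indeed, if some $Y\in\LA$ had finite codimension, then for any other $Y'\in\LA$ the quotient $Y'/(Y\cap Y')\cong (Y+Y')/Y$ embeds in $E/Y$, so $Y\cap Y'$ has codimension at most $\codim(Y)<\infty$ in the infinite-dimensional space $Y'$ and is therefore infinite-dimensional, contradicting almost disjointness. The key combinatorial input is then a covering claim: for every $M$ and every $k$, the finite-codimensional subspace $E/M$ is \emph{not} contained in $Y_0\cup\cdots\cup Y_k$. Over an infinite field this is the standard fact that a vector space is never a finite union of proper subspaces; over a finite field one cannot argue this way, and instead I would invoke B.\,H.\ Neumann's lemma: if $E/M=\bigcup_{i\leq k}(E/M\cap Y_i)$ then some $E/M\cap Y_i$ has finite index, i.e.\ finite codimension, in $E/M$, whence $Y_i\supseteq E/M\cap Y_i$ would be finite-codimensional in $E$, contradicting the previous paragraph. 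Thus a vector $z\in E/M$ avoiding all of $Y_0,\ldots,Y_k$ always exists.

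With the covering claim in hand, I would build a block sequence $(z_k)$ by recursion. Given $z_0<\cdots<z_{k-1}$, apply Lemma \ref{lem:extend} to each $Y_i$ with $i\leq k$ and the current sequence to obtain bounds $M_i$, set $M=\max_{i\leq k}M_i$ chosen large enough that $M\geq\max(\supp(z_{k-1}))$, and use the covering claim to select $z_k\in E/M$ with $z_k\notin Y_i$ for all $i\leq k$. Then $z_{k-1}<z_k$, so the block sequence is maintained, and because $z_k\notin Y_i$ and $z_k>M_i$, Lemma \ref{lem:extend} gives $\langle z_0,\ldots,z_k\rangle\cap Y_i=\langle z_0,\ldots,z_{k-1}\rangle\cap Y_i$ for every $i\leq k$. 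Hence, for each fixed $i$, the intersection $\langle z_0,\ldots,z_k\rangle\cap Y_i$ stabilizes for $k\geq i$, so $Z=\langle z_k:k\in\omega\rangle$ satisfies $Z\cap Y_i=\langle z_0,\ldots,z_{i-1}\rangle\cap Y_i$, which is finite-dimensional. Therefore $Z$ is almost disjoint from every $Y_i$, yielding the desired contradiction.

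I expect the main obstacle to be the covering claim in the finite-field case: there a vector space genuinely can be a finite union of proper subspaces (e.g.\ $\F_2^2$ is covered by its three lines), so the naive argument fails and one must crucially combine the infinite-codimensionality established from $|\LA|\geq 2$ with Neumann's lemma (or an equivalent index-counting argument) to rule out the covering. The remaining steps are routine bookkeeping with Lemma \ref{lem:extend} and the stabilization of the intersections.
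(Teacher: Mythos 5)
Your proof is correct, but it takes a genuinely different route from the paper's. The paper first replaces each $Y_k$ by a relatively finite-codimensional subspace so that the family becomes pairwise \emph{disjoint}, then splits into two cases: for finite $\LA$ it iterates Lemma \ref{lem:disj_subs2}, and for countably infinite $\LA$ it iterates Lemma \ref{lem:disj_subs1} to build a block sequence $(x_m)$ with $\langle(x_m)\rangle\cap Y_n=\langle x_n\rangle$ for every $n$. The step you isolate as the main obstacle --- producing a vector above a prescribed level lying outside finitely many of the subspaces --- is handled in the paper not by a covering argument but by the constructive trick inside Lemma \ref{lem:disj_subs2}: pick $x_0'<\cdots<x_{n+1}'$ with $x_k'\in Y_k$ and $\langle x_0',\ldots,x_{n+1}'\rangle\cap Y_k=\langle x_k'\rangle$, and observe that $x=x_0'+\cdots+x_{n+1}'$ cannot lie in any $Y_k$, since it would then be a scalar multiple of $x_k'$ alone. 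Your approach instead derives infinite codimension of every member from $|\LA|\geq 2$ and then invokes B.~H.~Neumann's lemma (or the finite-union-of-proper-subspaces fact over infinite fields) to get the covering claim. What each buys: yours is a single uniform diagonalization with no case split and no reduction to pairwise disjointness, at the cost of importing an external group-theoretic lemma; the paper's is entirely self-contained in its own Lemmas \ref{lem:extend}--\ref{lem:disj_subs2}, and in the countably infinite case yields the slightly sharper conclusion that the new subspace meets each $Y_n$ in exactly a line, but it needs the preliminary pairwise-disjointness reduction and the two-case analysis. Both arguments are sound; your stabilization bookkeeping ($Z\cap Y_i=\langle z_0,\ldots,z_{i-1}\rangle\cap Y_i$ once every later step protects $Y_i$) is exactly right.
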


\begin{proof}
	Suppose first that $\LA=\{Y_0,\ldots,Y_n,Y_{n+1}\}$ is a finite almost disjoint family of block subspaces. By replacing each $Y_k$ with a relatively finite-codimensional subspace, we may assume that they are pairwise disjoint. Pick an $x_0$ not in any of the $Y_k$'s, which can be done as in the proof of Lemma \ref{lem:disj_subs2}. By repeatedly applying Lemma \ref{lem:disj_subs2}, we can build an infinite block sequence $(x_m)$ such that for each $m$ and $k\leq n+1$, $\langle x_0,\ldots,x_m\rangle\cap Y_k=\{0\}$. Then, $\langle (x_m)\rangle$ is disjoint from each $Y_k$, witnessing that $\LA$ fails to be maximal.
	
	Suppose that $\LA=\{Y_n:n\in\omega\}$ is a countably infinite almost disjoint family of block subspaces. Again, by passing to finite-codimensional subspaces, we may assume that the $Y_k$ are pairwise disjoint. Pick a nonzero $x_0\in Y_0$. By Lemma \ref{lem:extend}, we can pick $x_1\in Y_1/x_0$ such that
	\begin{align*}
		\langle x_0,x_1\rangle \cap Y_0&=\langle x_0\rangle\\
		\langle x_0,x_1\rangle \cap Y_k&\subseteq \langle x_0,x_1\rangle \quad\text{for $k\geq 1$}.
	\end{align*}
	In general, given $x_0,\ldots, x_m$, we can apply Lemma \ref{lem:extend} to obtain $x_{m+1}\in Y_{m+1}/x_m$ such that
	 \begin{align*}
		\langle x_0,\ldots,x_m,x_{m+1}\rangle \cap Y_0&=	\langle x_0,\ldots,x_m\rangle \cap Y_0=\langle x_0\rangle\\
		\vdots\\
		\langle x_0,\ldots,x_m,x_{m+1}\rangle \cap Y_m &= \langle x_0,\ldots,x_m\rangle \cap Y_m\subseteq\langle x_0,\ldots,x_m\rangle\\
		\langle x_0,\ldots,x_m,x_{m+1}\rangle \cap Y_k&\subseteq \langle x_0,\ldots,x_m,x_{m+1}\rangle \quad\text{for $k\geq m+1$}.
	 \end{align*}
	Thus, $(x_m)$ is an infinite block sequence such that $\langle(x_m)\rangle\cap Y_n\subseteq\langle x_0,\ldots,x_n\rangle$ for each $n\in\omega$, and so again, $\LA$ fails to be maximal.
\end{proof}


For $f,g\in\omega^\omega$, we write $f<^*g$ if there is some $N$ such that $f(n)<g(n)$ for all $n\geq N$. A family of functions $\LB\subseteq\omega^\omega$ is \emph{bounded} if there is some $h\in\omega^\omega$ such that $f<^* h$ for all $f\in\LB$, and \emph{unbounded} otherwise. We write 
\[
	\b=\min\{|\LB|:\LB\text{ is an unbounded family in $\omega^\omega$}\}.
\]
It is easy show that $\b$ is uncountable and it is well-known that $\b\leq\a$ (see Proposition 8.4 in \cite{MR2768685} or Theorem 3.1 in \cite{MR776622}). The corresponding result for infinite-dimensional block subspaces of $\FIN$ was proved in \cite{MR3685044}, however their proof does not appear to easily generalize; our proof here uses Lemma \ref{lem:extend} to adapt the usual proof of $\b\leq\a$.

\begin{prop}\label{prop:bleqa}
	$\b\leq\a_{\mathrm{vec},F}$.
\end{prop}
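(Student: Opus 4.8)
The plan is to adapt the classical proof that $\b\leq\a$ to the vector space setting. Suppose for contradiction that $\LA$ is an infinite mad family of subspaces with $|\LA|<\b$. The strategy is to build, from a hypothetical almost disjoint family of size less than $\b$, a single subspace $Z$ that is almost disjoint from every member of $\LA$, contradicting maximality. As in the proof of Proposition \ref{prop:mad_unctbl}, I would first pass to finite-codimensional subspaces so that we may assume $\LA$ is pairwise \emph{disjoint} (not merely almost disjoint); this is harmless since almost disjointness from the shrunken family implies almost disjointness from the original. Enumerate $\LA=\{Y_\xi:\xi<\kappa\}$ with $\kappa<\b$.

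The next step is to produce, for each $Y_\xi$, a witnessing function in $\omega^\omega$ that encodes the ``speed'' at which one must spread out a block sequence to control its intersections with $Y_\xi$. Concretely, I would use the ``moreover'' clause of Lemma \ref{lem:extend}: for a fixed subspace $Y$, the threshold $M$ depends only on $Y$ and $\max(\supp(x_m))$. So for each $\xi<\kappa$ define $f_\xi\in\omega^\omega$ by letting $f_\xi(j)$ be the value of $M$ furnished by Lemma \ref{lem:extend} for the subspace $Y_\xi$ and the input $\max(\supp(x_m))=j$. Since $\kappa<\b$, the family $\{f_\xi:\xi<\kappa\}$ is \emph{bounded}: there is a single $h\in\omega^\omega$, which we may take to be strictly increasing, with $f_\xi<^* h$ for all $\xi$. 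After replacing each $f_\xi$ by $\max(f_\xi,h)$ on an initial segment if needed, I would arrange a uniform diagonal bound so that a block sequence whose successive supports grow faster than $h$ will, past some point, have its spans interact with each $Y_\xi$ only in the controlled manner of Lemma \ref{lem:extend}.

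With $h$ in hand, I would construct a block sequence $(x_m)$ recursively so that $\min(\supp(x_{m+1}))>h(\max(\supp(x_m)))$, i.e., each new vector is chosen far enough above the previous one that it clears the threshold dictated by $h$. The key point is that for each fixed $\xi$, since $f_\xi<^* h$, there is an $N_\xi$ beyond which every step of the construction automatically satisfies the hypothesis of Lemma \ref{lem:extend} for $Y_\xi$; hence for $m\geq N_\xi$, adjoining $x_{m+1}$ changes $\langle x_0,\ldots,x_m\rangle\cap Y_\xi$ only by possibly adding a single dimension (when $x_{m+1}\in Y_\xi$). Therefore $\langle(x_m)\rangle\cap Y_\xi$ can grow by at most one dimension at each step $m\geq N_\xi$, but—and here I need to ensure this—only finitely often. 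To force finiteness of the intersection, I would additionally select each $x_{m+1}$ \emph{outside} $Y_\xi$ for the relevant $\xi$; since at stage $m$ only finitely many constraints are active (those $\xi$ with $N_\xi\leq m$ among a pre-chosen countable cofinal subfamily, or handled by a bookkeeping argument), Lemma \ref{lem:disj_subs2} lets me pick $x_{m+1}$ avoiding the finitely many subspaces currently in play while respecting the growth condition.

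The main obstacle I anticipate is the bookkeeping in the last step: the family $\LA$ may be uncountable while the block sequence has only countably many stages, so I cannot explicitly diagonalize against every $Y_\xi$ at its own stage. The resolution is that I do not need to; the boundedness by $h$ does the heavy lifting, guaranteeing for \emph{each} $\xi$ that past $N_\xi$ every extension is controlled, so $\langle(x_m)\rangle\cap Y_\xi$ is determined by the finite initial segment $\langle x_0,\ldots,x_{N_\xi}\rangle\cap Y_\xi$ together with at most the finitely many subsequent $x_m$ lying in $Y_\xi$. The delicate part is arguing that this latter set is finite, which follows because once two distinct members $Y_\xi,Y_{\xi'}$ are disjoint, a single $x_m$ cannot lie in both, and a careful choice (via Lemma \ref{lem:disj_subs2}) keeps each $x_m$ out of all but possibly one subspace—or better, out of all of them—so that each intersection $\langle(x_m)\rangle\cap Y_\xi$ is finite-dimensional. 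This yields a subspace $Z=\langle(x_m)\rangle$ almost disjoint from every $Y_\xi$, contradicting the maximality of $\LA$ and completing the proof that $\b\leq\a_{\mathrm{vec},F}$.
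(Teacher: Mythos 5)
There is a genuine gap, located exactly where you flag ``the delicate part.'' Your argument supplies only the thresholds coming from Lemma \ref{lem:extend} (your $f_\xi$, the paper's $g_\alpha$), which guarantee that past $N_\xi$ each new vector $x_{m+1}$ either leaves $\langle x_0,\ldots,x_m\rangle\cap Y_\xi$ unchanged or adds one dimension, according as $x_{m+1}\notin Y_\xi$ or $x_{m+1}\in Y_\xi$. To conclude that $\langle(x_m)\rangle\cap Y_\xi$ is finite-dimensional you must show that only finitely many $x_m$ lie in $Y_\xi$, and both mechanisms you propose for this fail. First, you cannot ``pass to finite-codimensional subspaces so that $\LA$ is pairwise disjoint'': a single $Y_\xi$ meets the other (possibly uncountably many) members in finite-dimensional but collectively unbounded intersections (this already happens for the almost disjoint family built from branches of $2^{<\omega}$), so no relatively finite-codimensional shrinking removes them all; the paper makes only the first $\omega$ members pairwise disjoint. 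Second, at stage $m$ the set of $\xi$ with $N_\xi\leq m$ can be uncountable (an uncountable family of $N_\xi$'s has an uncountable fiber over some $m$), so Lemma \ref{lem:disj_subs2}, which handles only finitely many subspaces at a time, cannot keep $x_{m+1}$ out of all the ``active'' $Y_\xi$; and since $E$ is countable while $\LA$ need not be, choosing a vector outside every member is not something the finite lemmas provide.

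The missing idea is the second family of functions in the paper's proof. One fixes a countable subfamily $\{Y_n:n<\omega\}$, made pairwise disjoint, and chooses $x_{n+1}$ \emph{inside} $Y_{n+1}$ rather than outside everything. For each remaining $Y_\alpha$ one defines $f_\alpha(n)=\min\{k:Y_\alpha\cap Y_n\subseteq\langle e_0,\ldots,e_k\rangle\}$, encoding the finite-dimensionality of the pairwise intersections; dominating these $f_\alpha$ together with the Lemma \ref{lem:extend} thresholds by a single $h$ forces any $x_{n+1}\in Y_{n+1}$ with support above $f_\alpha(n+1)$ to lie \emph{outside} $Y_\alpha$ automatically, and outside each $Y_m$ with $m<\omega$, $m\neq n+1$, by disjointness. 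This is the standard device from the classical proof of $\b\leq\a$ (take the $n$th new element from the $n$th member of the family); without it, the finiteness of $\{m:x_m\in Y_\xi\}$ is unjustified and the proof does not go through.
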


\begin{proof}
	Let $\LA$ be an infinite almost disjoint family of block subspaces with $|\LA|=\kappa<\b$. We may enumerate $\LA$ as $\{Y_\alpha:\alpha<\kappa\}$. By passing to finite-codimensional subspaces, we may assume that the $Y_n$, for $n<\omega$, are pairwise disjoint. For $\omega\leq\alpha<\kappa$, define $f_\alpha$ by
	\[
		f_\alpha(n)=\min\{k: Y_\alpha\cap Y_n\subseteq\langle e_0,\ldots,e_k\rangle\}.
	\]
	Define $f_m$ for $m<\omega$ arbitrarily. For each $\alpha<\kappa$, let $g_\alpha\in\omega^\omega$ be such that whenever $y_0<\cdots<y_k$ are such that $\supp(y_k)\subseteq[0,n]$ and $x>g_\alpha(n)$,
	\[
		\langle y_0,\ldots,y_k,x\rangle\cap Y_\alpha=\begin{cases} \langle y_0,\ldots,y_k\rangle\cap Y_\alpha &\text{if $x\notin Y_\alpha$,}\\ \langle y_0,\ldots,y_k\rangle\cap Y_\alpha+\langle x\rangle &\text{if $x\in Y_\alpha$.}\end{cases}
	\] 
	Such functions exist by Lemma \ref{lem:extend} (we are using the fact that the $M$ in Lemma \ref{lem:extend}(b) depends only on the given subspace and the maximum of the \emph{supports} of the given finite sequence). As $\kappa<\b$, there is an $h\in\omega^\omega$, which we may take strictly increasing, with $\max\{f_\alpha,g_\alpha\}<^*h$ for all $\alpha<\kappa$. Define a block sequence $X=(x_n)$ by choosing $x_0\in Y_0$ and $x_{n+1}\in Y_{n+1}/h(\max(\supp(x_n)))$ for all $n\in\omega$. We claim that $\langle X\rangle$ is almost disjoint from each $Y_\alpha$.
	
	Case 1: $\alpha=m<\omega$. Let $N> m$ be such that $g_m(n)<h(n)$ for all $n\geq N$. Note that $\max(\supp(x_N))\geq N$. If $k\geq N$, then $x_{k+1}\in Y_{k+1}/g_m(\max(\supp(x_{k}))$ and, since $Y_{k+1}$ and $Y_m$ are disjoint,
	\[
		\langle x_N,\ldots,x_{k},x_{k+1}\rangle\cap Y_m=\langle x_N,\ldots,x_{k}\rangle\cap Y_m=\cdots=\langle x_N\rangle\cap Y_m=\{0\}.
	\]
	This shows that $\langle X/x_{N-1}\rangle$ is disjoint from $Y_m$.
	
	Case 2: $\omega\leq\alpha<\kappa$. Let $N$ be such that $\max\{f_\alpha(n),g_\alpha(n)\}<h(n)$ for all $n\geq N$. Again, note that $\max(\supp(x_N))\geq N$. If $k\geq N$, then $x_{k+1}\in Y_{k+1}/g_\alpha(\max(\supp(x_k)))$, so
	\[
		\langle x_N,\ldots,x_k,x_{k+1}\rangle\cap Y_\alpha=\begin{cases} \langle x_N,\ldots,x_k\rangle\cap Y_\alpha &\text{if $x_{k+1}\notin Y_\alpha$,}\\ \langle x_N,\ldots,x_k\rangle\cap Y_\alpha+\langle x_{k+1}\rangle &\text{if $x_{k+1}\in Y_\alpha$.}\end{cases}
	\]
	However, as $x_{k+1}>f_\alpha(k+1)$ and $x_{k+1}\in Y_{k+1}$, it must be that $x_{k+1}\notin Y_\alpha$. Then, as in Case 1,
	\[
		\langle x_N,\ldots,x_{k},x_{k+1}\rangle\cap Y_\alpha=\langle x_N,\ldots,x_{k}\rangle\cap Y_\alpha=\cdots=\langle x_N\rangle\cap Y_\alpha=\{0\},
	\]
	showing, again, that $\langle X/x_{N-1}\rangle$ is disjoint from $Y_\alpha$. 
	
	Thus, $\LA$ fails to be maximal, and so $\b\leq\a_{\mathrm{vec},F}$.
\end{proof}

Recall that $\FIN$ is the collection of all nonempty finite subsets of $\omega$. For $a,b\in\FIN$, we write $a<b$ if $\max(a)<\min(b)$, and call a sequence $(a_n)$ of elements of $\FIN$ a \emph{block sequence} if $a_n<a_{n+1}$ for all $n\in\omega$. Let $\FIN^{[\infty]}$ denote the set of infinite block sequences in $\FIN$. For $A=(a_n)\in\FIN^{[\infty]}$,
	\[
		\FU(A)=\{a_{n_0}\cup\cdots\cup a_{n_k}:n_0<\cdots<n_k\},
	\] 
	is the \emph{combinatorial subspace} generated by $A$. We say that $A,B\in\FIN^{[\infty]}$ are \emph{almost disjoint} if $\FU(A)\cap\FU(B)$ is finite. Following Brendle and Garc\'ia \'Avila \cite{MR3685044}, let $\a_{\FIN}$ be the minimum cardinality of an infinite maximal almost disjoint family (defined in the obvious way) of block sequences in $\FIN$. As commented in \S\ref{sec:Intro}, this is the same as $\a_{\mathrm{vec},F}$ when $|F|=2$.

We denote by $\mathrm{non}(\LM)$ the minimum size of a nonmeager subset of $\R$. Brendle and Garc\'ia \'Avila show that $\mathrm{non}(\LM)\leq\a_{\FIN}$ (Theorem 3 in \cite{MR3685044}) by showing $\b\leq\a_{\FIN}$ (Proposition 12 in \cite{MR3685044}), $\mathrm{non}(\LM)=\max\{\b,\b(pbd\neq^*)\}$ (Lemma 15 in \cite{MR3685044}, attributed to folklore), and finally, $\b(pbd\neq^*)\leq\a_{\FIN}$ (Theorem 16 in \cite{MR3685044}). Here, $\b(pbd\neq^*)$ is the common (Lemma 14 in \cite{MR3685044}) value of the cardinals $\b_h(p\neq^*)$, where, for $h:\omega\to\omega$ a function with $h(n)\to\infty$ as $n\to\infty$, $\b_h(p\neq^*)$ is the minimum size of a family $\LF\subseteq\omega^\omega$ such that for all partial $g:\omega\rightharpoonup\omega$ with infinite domain and bounded by $h$ on that domain, there is an  $f\in \LF$ which is equal to $g$ infinitely often.

For $A=(a_n)\in\FIN^{[\infty]}$, denote by
\[
	E_A=\bigcup\{a_n:|a_n|=1\}.
\]
A careful reading of their proof reveals that Brendle and Garc\'ia \'Avila have shown the following:

\begin{thm}[cf.~Theorem 16 in \cite{MR3685044}]\label{thm:B-GA}
	Suppose that $\LA\subseteq\FIN^{[\infty]}$ satisfies the following for all $A,A'\in\LA$:
	\begin{enumerate}[label=\textup{(\roman*)}]
		\item $E_A$ is coinfinite, and
		\item if $A\neq A'$, then $E_A\cap E_{A'}$ is finite.
	\end{enumerate}
	Then, if $\omega\leq|\LA|<\b(pbd\neq^*)$, there is a $B\in\FIN^{[\infty]}$ which is almost disjoint from each element of $\LA$.
\end{thm}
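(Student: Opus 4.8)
The goal is to show $\LA$ is not maximal, i.e.\ to build a single $B=(b_m)\in\FIN^{[\infty]}$ with $\FU(B)\cap\FU(A)$ finite for every $A\in\LA$. My first step is to record an easily monitored sufficient condition. Writing $W=\bigcup_m b_m$ for the support of $B$, every $s\in\FU(B)$ lies in $W$, so every $s\in\FU(B)\cap\FU(A)$ is a union of blocks of $A$ each contained in $W$; consequently, if for each $A$ only finitely many blocks of $A$ are subsets of $W$, then $\FU(B)\cap\FU(A)$ is finite and $B$ is almost disjoint from $A$. Since the singleton blocks of $A$ are precisely the points of $E_A$, this condition separates into keeping $W\cap E_A$ finite and never letting $W$ absorb a whole block of $A$ of size $\geq 2$; the latter is cheap, as one can always leave a witnessing point outside $W$, so the real content is the interaction between $W$ and the sets $E_A$.

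The construction itself is a recursion driven by an escaping function extracted from the smallness hypothesis $|\LA|<\b(pbd\neq^*)$. Fixing a convenient $h$ with $h(n)\to\infty$ (all $\b_h(p\neq^*)$ agreeing), I would attach to each $A\in\LA$ a function $f_A\in\omega^\omega$ so that $f_A(n)$ names the one choice at the $n$th stage of the recursion that would cause $W$ to capture a new block of $A$ (in particular, to meet $E_A$). Because $|\LA|<\b_h(p\neq^*)$, the family $\{f_A:A\in\LA\}$ is not a matching family, so there is a partial $g$ with infinite domain, bounded by $h$ there, such that $\{n\in\dom g: f_A(n)=g(n)\}$ is finite for every $A$. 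Reading off, at each $n\in\dom g$, the block dictated by the value $g(n)$ yields an infinite block sequence $B$ (as $\dom g$ is infinite) for which, for each $A$, all but finitely many stages make the safe choice $g(n)\neq f_A(n)$; hence $W$ captures only finitely many blocks of each $A$, and $B$ is as required.

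The main obstacle, and the heart of the argument, is the coding in the previous paragraph: the matching cardinal only delivers coordinatewise inequality $g(n)\neq f_A(n)$, so the recursion must be arranged so that each $A$ forbids at most a single value at each stage. This is exactly where hypotheses (i) and (ii) enter. Coinfiniteness of $E_A$ guarantees that at every stage there is room to place the next block avoiding $E_A$, so that the forbidden value $f_A(n)$ is a genuine single option rather than a large set to be dodged; almost disjointness of the $E_A$ ensures that the obstructions coming from distinct members of $\LA$ remain localized, so that the ``danger'' presented by each $A$ collapses to one coordinate value per stage and the escaping $g$ can avoid all of them simultaneously in the eventual sense. Setting up this single-valued bookkeeping is the delicate part; concretely, I would follow the fusion in Brendle and Garc\'ia \'Avila's proof of Theorem 16 of \cite{MR3685044} essentially verbatim and verify that it appeals only to (i) and (ii), rather than to any further structure of the family $\LA$.
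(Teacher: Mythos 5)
Your proposal is correct and takes essentially the same route as the paper: the paper offers no self-contained argument here either, but simply records that a careful reading of the proof of Theorem 16 in \cite{MR3685044} shows it uses only hypotheses (i) and (ii) about $\LA$ (and not maximality or any further structure), which is precisely the verification you defer to. Your preliminary reductions --- that it suffices to prevent all but finitely many blocks of each $A$ from being absorbed into $W=\bigcup_m b_m$, and that $|\LA|<\b(pbd\neq^*)$ is exploited by producing an infinite-domain, $h$-bounded partial $g$ that each coding function $f_A$ matches only finitely often --- are sound and consistent with the cited argument, though I would caution that dismissing the blocks of size $\geq 2$ as ``cheap'' understates their role: controlling them is part of the same single-value-per-stage bookkeeping that you (appropriately) take verbatim from Brendle and Garc\'ia \'Avila.
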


For $X=(x_n)$ a block sequence in $E$, let $\supp(X)=(\supp(x_n))\in\FIN^{[\infty]}$. If $\LA$ is a collection of infinite-dimensional block subspaces of $E$, then let 
\[
	\supp(\LA)=\{\supp(X):\text{$X$ is a block sequence and }\langle X\rangle\in\LA\}.
\]
Note if $X$ and $Y$ are block sequences spanning the same subspace, then $\supp(X)=\supp(Y)$. The proof of the following is easy and omitted.

\begin{lemma}\label{lem:supp_proj}
	For any block sequence $X$ in $E$, if $A\in\FIN^{[\infty]}$ is such that $\FU(A)\subseteq\FU(\supp(X))$, then there is a block sequence $Y$ in $E$ with $\langle Y\rangle\subseteq\langle X\rangle$ and $\supp(Y)=A$.\footnote{This lemma implies that the $\supp$ map is a \emph{projection}, in the sense of forcing, between block sequences in $E$ and those in $\FIN$. See the related discussion in \S 6 of \cite{MR3864398}.}	\qed
\end{lemma}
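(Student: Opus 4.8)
The plan is to prove Lemma \ref{lem:supp_proj} constructively by building the block sequence $Y$ recursively, one vector at a time, where each $y_i$ is a vector in $\langle X\rangle$ whose support is exactly $a_i$. Write $X=(x_n)$ and $A=(a_i)$. The hypothesis $\FU(A)\subseteq\FU(\supp(X))$ means every $a_i$ is a finite union $\supp(x_{n_0})\cup\cdots\cup\supp(x_{n_k})$ of supports of blocks in $X$, since the elements of $\FU(\supp(X))$ are precisely such unions. The natural candidate for the corresponding vector is $y_i=x_{n_0}+\cdots+x_{n_k}$: because the $x_n$ form a block sequence their supports are pairwise disjoint and increasing, so there is no cancellation, and $\supp(y_i)=\supp(x_{n_0})\cup\cdots\cup\supp(x_{n_k})=a_i$ exactly.

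The key steps, in order, are: first, observe that for each $i$ there is a unique finite set of indices $n_0<\cdots<n_k$ with $a_i=\supp(x_{n_0})\cup\cdots\cup\supp(x_{n_k})$, obtained by decoding $a_i$ as an element of $\FU(\supp(X))$; the uniqueness follows from the blocks $\supp(x_n)$ being pairwise disjoint and from the fact that each $\supp(x_n)$ appears as a single ``atom'' in the union structure. Second, set $y_i=\sum_{j=0}^k x_{n_j}$, and note $\supp(y_i)=a_i$ as above, so in particular $y_i\in\langle X\rangle$. Third, verify that $Y=(y_i)$ is a genuine block sequence: since $A=(a_i)\in\FIN^{[\infty]}$ we have $a_i<a_{i+1}$, that is $\max(a_i)<\min(a_{i+1})$, and because $\supp(y_i)=a_i$ this gives $\max(\supp(y_i))<\min(\supp(y_{i+1}))$, i.e.\ $y_i<y_{i+1}$. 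Finally, $\langle Y\rangle\subseteq\langle X\rangle$ is immediate since each $y_i$ is a finite $F$-linear combination (in fact an unsigned sum) of the $x_n$, and $\supp(Y)=(\supp(y_i))=(a_i)=A$ by construction.

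The only point requiring any care, and the one I would flag as the main (mild) obstacle, is confirming that the decoding of $a_i$ into the union of supports of the $x_n$ is unambiguous, so that $y_i$ is well-defined. This rests on the elementary fact that for a block sequence $X$ the supports $\supp(x_n)$ are pairwise disjoint consecutive intervals-in-order, hence any element of $\FU(\supp(X))$ determines the set of participating indices uniquely (one simply reads off which blocks $\supp(x_n)$ are contained in $a_i$). Given this, there is no linear-algebra subtlety at all: the absence of cancellation when summing blocks with disjoint supports makes the support of the sum the union of the supports, which is exactly what matches $a_i$ to $A$. This is precisely why the author records the proof as easy and omits it.
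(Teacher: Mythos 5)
Your proof is correct and is evidently the argument the paper intends by its omitted ``easy'' proof: decompose each $a_i$ as a disjoint union of supports of blocks of $X$ and take the corresponding unsigned sum, using disjointness of supports to rule out cancellation. One tiny inaccuracy: the supports $\supp(x_n)$ need not be intervals (block position only gives $\max(\supp(x_n))<\min(\supp(x_{n+1}))$), but your argument only uses that they are pairwise disjoint and nonempty, which already makes the decoding unique (and in any case well-definedness of $y_i$ does not actually require uniqueness).
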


\begin{lemma}\label{lem:supp_ad}
	If $\LA$ is a family of infinite-dimensional block subspaces of $E$ and $A\in\FIN^{[\infty]}$ is almost disjoint (in the sense of $\FIN$) from every element of $\supp(\LA)$, then for any block sequence $X$ in $E$ with $\supp(X)=A$, $\langle X\rangle$ will be almost disjoint (in the sense of $E$) from every $Y\in\LA$.
\end{lemma}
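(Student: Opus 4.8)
The plan is to connect almost disjointness of block sequences in $E$ to almost disjointness of their supports in $\FIN$, using the fact that for a block sequence $X=(x_n)$, the support map carries $\langle X\rangle$ faithfully onto the combinatorial subspace $\FU(\supp(X))$. The key observation I would make precise is that if $v\in\langle X\rangle$ is a nonzero vector, say $v=\sum_{i} \lambda_i x_{n_i}$ with the $\lambda_i$ nonzero, then because $(x_n)$ is a block sequence (so the supports $\supp(x_{n_i})$ are disjoint and increasing), no cancellation can occur across blocks, and hence
\[
	\supp(v)=\bigcup_i \supp(x_{n_i})\in\FU(\supp(X)).
\]
In particular, every nonzero vector in $\langle X\rangle$ has support lying in $\FU(\supp(X))$.

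Given this, I would argue by contraposition on the conclusion. Suppose $X$ is a block sequence with $\supp(X)=A$ and that $\langle X\rangle$ fails to be almost disjoint (in the sense of $E$) from some $Y\in\LA$; that is, $\langle X\rangle\cap Y$ is infinite-dimensional. Fix a block sequence $Z=(z_n)$ with $\langle Z\rangle=Y$, so that $\supp(Z)\in\supp(\LA)$. My aim is to produce infinitely many elements of $\FU(A)\cap\FU(\supp(Z))$, contradicting that $A$ is almost disjoint from $\supp(Z)$. Since $\langle X\rangle\cap Y$ is infinite-dimensional, I can choose infinitely many linearly independent nonzero vectors $v_0, v_1, v_2,\ldots \in \langle X\rangle\cap Y$; by the reduced echelon form / spreading-out remarks made earlier in the paper, I may in fact arrange these to form a block sequence, so that their supports $\supp(v_j)$ are pairwise distinct (indeed increasing). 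Each $v_j$ lies in $\langle X\rangle$, so $\supp(v_j)\in\FU(\supp(X))=\FU(A)$ by the observation above; and each $v_j$ lies in $Y=\langle Z\rangle$, so likewise $\supp(v_j)\in\FU(\supp(Z))$. Thus $\{\supp(v_j):j\in\omega\}$ is an infinite subset of $\FU(A)\cap\FU(\supp(Z))$, so these two combinatorial subspaces have infinite intersection, i.e.\ $A$ is not almost disjoint from $\supp(Z)\in\supp(\LA)$ in the sense of $\FIN$, contrary to hypothesis.

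I expect the main technical point—though still routine—to be the claim that every nonzero $v\in\langle X\rangle$ has $\supp(v)\in\FU(\supp(X))$ exactly (not merely contained in $\bigcup_n \supp(x_n)$), which rests crucially on $X$ being a \emph{block} sequence so that the leading coordinates of distinct $x_{n_i}$ occupy disjoint coordinate ranges and cannot cancel. The only other place requiring mild care is the passage from an infinite-dimensional intersection $\langle X\rangle\cap Y$ to an actual block sequence of witnesses with distinct supports; this is handled by taking a basis in reduced echelon form of the intersection subspace and passing to a spread-out subsequence, exactly as the paper notes every subspace contains a block subspace. Neither step involves a genuine obstacle, which is presumably why the authors describe the proof as easy; the content is simply that the support map respects the two notions of almost disjointness via the faithfulness encapsulated in Lemma \ref{lem:supp_proj}.
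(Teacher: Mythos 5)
Your argument is correct and is essentially the paper's own proof: the paper likewise argues by contradiction, takes an infinite block sequence $Z$ inside $\langle X\rangle\cap Y$, and observes that $\supp(Z)$ gives infinitely many elements of $\FU(A)\cap\FU(\supp(Y))$. Your explicit justification that $\supp(v)=\bigcup_i\supp(x_{n_i})\in\FU(\supp(X))$ for nonzero $v\in\langle X\rangle$ is exactly the (implicit) content the paper relies on, so nothing is missing.
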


\begin{proof}
	Let $\LA$ and $A$ be as described, and suppose that there is some block sequence $X$ with $\supp(X)=A$, and a subspace in $\LA$, with block basis $Y$, such that $\langle Y\rangle\cap\langle X\rangle$ is infinite-dimensional. Let $Z$ be an infinite block sequence in $\langle Y\rangle\cap\langle X\rangle$. Then, $\supp(Z)$ will witness that $A$ fails to be almost disjoint from $\supp(Y)$.
\end{proof}

\begin{lemma}\label{lem:supp_B-GA}
	If $\LB$ is an infinite almost disjoint family of block subspaces of $E$, then $\LA=\supp(\LB)$ satisfies conditions (i) and (ii) in Theorem \ref{thm:B-GA}.
\end{lemma}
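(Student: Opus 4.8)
The plan is to first identify $E_A$ intrinsically in terms of the block subspace it comes from, and then read off both conditions directly from almost disjointness.

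The key observation I would establish is that for a block sequence $X=(x_n)$ with $A=\supp(X)=(\supp(x_n))$, one has
\[
	E_A=\{j\in\omega:e_j\in\langle X\rangle\}.
\]
Indeed, $j\in E_A$ means $\supp(x_n)=\{j\}$ for some $n$, i.e.\ $x_n$ is a nonzero scalar multiple of $e_j$, whence $e_j\in\langle X\rangle$. Conversely, if $e_j\in\langle X\rangle$, write $e_j=\sum_n c_nx_n$; since the supports of the $x_n$ are consecutive (hence pairwise disjoint) blocks, no cancellation occurs between distinct terms, so $\supp\!\left(\sum_n c_nx_n\right)=\bigcup_{c_n\neq0}\supp(x_n)$. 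For this to equal $\{j\}$ there must be exactly one nonzero $c_n$, with $\supp(x_n)=\{j\}$, i.e.\ $j\in E_A$. In particular this shows $E_A$ depends only on $\langle X\rangle$, consistent with the well-definedness of $\supp$ on block subspaces noted before Lemma \ref{lem:supp_proj}.

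For condition (i), I argue by contraposition. If $E_A$ is not coinfinite, then it is cofinite, so $\langle X\rangle$ contains $e_j$ for all but finitely many $j$ and is therefore finite-codimensional in $E$. But a finite-codimensional subspace meets every infinite-dimensional subspace in an infinite-dimensional subspace: for any subspace $Y$, the natural map $Y\to E/\langle X\rangle$ has image inside a finite-dimensional quotient, so its kernel $Y\cap\langle X\rangle$ is infinite-dimensional whenever $Y$ is. Since $\LB$ is infinite it contains some $Y\neq\langle X\rangle$, and the above contradicts almost disjointness of $\LB$. Hence every $E_A$ with $A\in\supp(\LB)$ is coinfinite.

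For condition (ii), suppose $A=\supp(X)$ and $A'=\supp(X')$ are distinct elements of $\LA=\supp(\LB)$; since $\supp$ is determined by the subspace, $\langle X\rangle\neq\langle X'\rangle$ are distinct members of $\LB$. By the key observation, $\{e_j:j\in E_A\cap E_{A'}\}$ is a linearly independent set contained in $\langle X\rangle\cap\langle X'\rangle$. Were $E_A\cap E_{A'}$ infinite, this intersection would be infinite-dimensional, contradicting almost disjointness of $\langle X\rangle$ and $\langle X'\rangle$; hence $E_A\cap E_{A'}$ is finite. The one step requiring genuine care is the first---pinning down $E_A$ as $\{j:e_j\in\langle X\rangle\}$ via the no-cancellation property of block supports---after which both conditions are immediate consequences of the definition of almost disjointness.
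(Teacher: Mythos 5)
Your proof is correct and follows the same route as the paper, whose entire argument is the observation that $n\in E_A$ implies $e_n\in\langle X\rangle$, from which (i) and (ii) follow exactly as you derive them. The converse inclusion $\{j:e_j\in\langle X\rangle\}\subseteq E_A$ that you carefully verify is true but not needed for either condition.
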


\begin{proof}
	This follows immediately from the observation that if $A=\supp(X)$ for $X$ a block sequence in $E$, and $n\in E_A$, then $e_n\in\langle X\rangle$.
\end{proof}

Putting Lemma \ref{lem:supp_proj}, \ref{lem:supp_ad} and \ref{lem:supp_B-GA} together with Proposition \ref{prop:bleqa} and Theorem \ref{thm:B-GA}, we have:

\begin{cor}\label{cor:nonMleqa_vec}
	$\mathrm{non}(\LM)\leq\a_{\mathrm{vec},F}$.\qed
\end{cor}

\section{Cardinality: Consistency results}\label{sec:card_cons}

It follows from Proposition \ref{prop:mad_unctbl} that under $\CH$, every mad family of subspaces is of size $\cc$. Likewise, since $\MA_\kappa(\sigma\text{-centered})$ implies $\kappa<\p$ (cf.~\cite{MR643555}), and $\p\leq\b$ (see \cite{MR776622}), these together with Proposition \ref{prop:bleqa} yield $\kappa<\a_{\mathrm{vec},F}$. We give here a direct proof of this fact:


\begin{thm}\label{thm:MA_a_large}
	\textup{($\MA_\kappa(\sigma\text{-centered})$)} $\kappa<\a_{\mathrm{vec},F}$.	
\end{thm}

\begin{proof}
	Let $\LA$ be an infinite almost disjoint family of block subspaces. Define a poset $\P$ to be all pairs $(s,F)$ where $s$ is a finite normalized (i.e., leading coefficients are equal to $1$) block sequence in $E$ and $F$ a finite subset of $\LA$. We order elements of $\P$ by $(s',F')\leq(s,F)$ if $s'\sqsupseteq s$, $F'\supseteq F$, and $\forall X\in F(\langle s'\rangle\cap X\subseteq\langle s\rangle)$. Note that if $(s,F'),(s,F)\in\P$, for a fixed $s$, then $(s,F'\cup F)\in\P$ and extends both conditions. As there are only countably many such $s$, this shows that $\P$ is $\sigma$-centered. If $G$ is a filter in $\P$, then we let $X_G=\langle\bigcup\{s:\exists F((s,F)\in G)\}\rangle$.
	
	Observe that if $X\in\LA$, then the set $D_X=\{(s,F)\in\P:X\in F\}$ is dense, and if $G$ is a filter in $\P$ with $G\cap D_X\neq\emptyset$, then $X_G\cap X$ is finite dimensional. For $n\in\omega$, let $E_n=\{(s,F)\in\P:|s|\geq n\}$. In order to see that the sets $E_n$ are dense, it suffices to show that a given $(s,F)$ in $\P$ can be extended to an $(s\concat x,F)$ in $\P$. This can be accomplished by using Lemma \ref{lem:extend} to obtain an $M$ for which whenever $x>M$ and not in $\bigcup F$, $\langle s\concat x\rangle\cap X=\langle s\rangle\cap X$ for each of the finitely many $X\in F$. Then, for any such $x$, $(s\concat x,F)\leq(s,F)$.

	If $|\LA|\leq\kappa$, by $\MA_\kappa(\sigma\text{-centered})$, there is a filter $G\subseteq\P$ which meets the sets $D_X$ and $E_n$, for $X\in\LA$ and $n\in\omega$. Then, $X_G$ witnesses that $\LA$ fails to be maximal.
\end{proof}

Let $\B_\kappa$ be \emph{$\kappa$-random forcing}, the set of all positive measure Borel subsets of $2^\kappa$ ordered by containment modulo null sets, where $\kappa\geq\omega$ and $2^\kappa$ is given the product measure. By the \emph{random model}, we mean the generic extension of a model of $\CH$ obtained by forcing with $\B_{\aleph_2}$. It is well-known that in the random model, $\b=\d=\a=\aleph_1$ and $\mathrm{non}(\LM)=\c=\aleph_2$ (see, e.g., \S 11.4 of \cite{MR2768685}). Thus, by Corollary \ref{cor:nonMleqa_vec}, we have:

\begin{cor}\label{cor:a<a_vec}
	In the random model, $\aleph_1=\a<\a_{\mathrm{vec},F}=\aleph_2$.\qed
\end{cor}

Let $\C_\kappa$ be \emph{$\kappa$-Cohen forcing}, the set of all finite partial functions $p$ with $\dom(p)\subseteq\kappa\times\omega$ and $\ran(p)\subseteq 2$, ordered by extension. We identify $\C_{\aleph_0}$ with the set $\C$ of all finite partial functions $p$ with $\dom(p)\subseteq\omega$ and $\ran(p)\subseteq 2$. By the \emph{Cohen model}, we mean the generic extension of a model of $\CH$ obtained by forcing with $\C_{\aleph_2}$. Theorem \ref{thm:cohen} is stated as Theorem 3.7 in \cite{MR1756262}, however the proof given is just a reference to \cite{MR597342}. We give a complete proof here. See also Theorem 4 in \cite{MR3685044} for the analogous result for $\FIN$. 

\begin{thm}\label{thm:cohen}
	In the Cohen model, $\aleph_1=\a_{\mathrm{vec},F}<\c$.
\end{thm}

\begin{proof}
	We follow the proof of the corresponding result for mad families of subsets of $\omega$, Theorem 2.3 in Ch.~VIII of \cite{MR597342}. We define a maximal almost disjoint family $\LA=\{X_\xi:\xi<\omega_1\}$ of block subspaces having the property that it remain maximal after forcing with $\C$. By standard properties of Cohen forcing (Lemma 2.2 in Ch.~VIII of \cite{MR597342}), this suffices.
	
	Using $\CH$ in the ground model, let $(p_\xi,\tau_\xi)$ for $\omega\leq\xi<\omega_1$ enumerate all pairs $(p,\tau)$ such that $p\in\C$ and $\tau$ is a nice $\C$-name for a subset of $E$ (in the sense of Definition 5.11 in Ch.~VII of \cite{MR597342}). We recursively pick block subspaces $X_\xi$ as follows: Let $X_n$, $n<\omega$, be any sequence of infinite-dimensional almost disjoint block subspaces. If $\omega\leq\xi<\omega_1$, and we have chosen $X_\eta$ for all $\eta<\xi$, choose $X_\xi$ almost disjoint from each of the (countably many) $X_\eta$ for $\eta<\xi$ and so that if 
	\begin{align}\label{cohen_ind_hyp}
		p_\xi\forces_\C\text{$\tau_\xi$ is an infinite-dimensional subspace and $\forall\eta<\xi\dim(\tau_\xi\cap \check{X}_\eta)<\infty$}
	\end{align}
	then
	\[
		\forall n\forall q\leq p_\xi\exists r\leq q\exists v>n(v\in X_\xi \text{ and } r\forces_\C\check{v}\in\tau_\xi).
	\]
	To see that $X_\xi$ can be chosen, assume that (\ref{cohen_ind_hyp}) holds. Let $Y_i$ enumerate $\{X_\eta:\eta<\xi\}$ and let $q_i$ enumerate $\{q:q\leq p_\xi\}$. By (\ref{cohen_ind_hyp}), for each $i$, $q_i\forces_\C\dim(\tau_\xi\cap\check{Y}_i)<\infty$. We construct $r_i\in\C$ and $x_i\in E$ inductively in $i$. Pick $r_0\leq q_0$ and $x_0$ a nonzero vector so that $r_0\forces_\C \check{x}_0\in\tau_\xi$. Having chosen $r_0,\ldots,r_n$ and $x_0<\cdots<x_n$ so that $r_i\leq q_i$ and 
	\[
		r_i\forces_\C \check{x}_i\in\tau_\xi \land \forall k\leq i(\langle \check{x}_0,\ldots,\check{x}_i\rangle\cap \check{Y}_k\subseteq\langle\check{x}_0,\ldots,\check{x}_{k}\rangle),
	\]
	for each $i\leq n$, apply Lemma \ref{lem:extend} to find $r_{n+1}\leq q_{n+1}$ and $x_{n+1}>x_n$ so that 
	\[
		r_{n+1}\forces_\C \check{x}_{n+1}\in\tau_\xi \land \forall k\leq n+1(\langle \check{x}_0,\ldots,\check{x}_n,\check{x}_{n+1}\rangle\cap \check{Y}_k\subseteq\langle \check{x}_0,\ldots,\check{x}_{k}\rangle).
	\]
	This is done as in the infinite case of Proposition \ref{prop:mad_unctbl}. Let $X_\xi=\langle(x_n)\rangle$.
	
	Clearly $\LA$ is an almost disjoint family. It suffices to show that it is maximal in $\V[G]$, where $G$ is $\V$-generic for $\C$. Towards a contradiction, suppose that for some $(p_\xi,\tau_\xi)$ with $p_\xi\in G$,
	\[
	 	p_\xi\forces_\C\text{$\tau_\xi$ is an infinite-dimensional subspace and $\forall X\in\check{\LA}(\dim(\tau_\xi\cap X)<\infty)$}.
	\]
	In particular, (\ref{cohen_ind_hyp}) holds at $\xi$. But $p_\xi\forces_\C\dim(\tau_\xi\cap \check{X}_\xi)<\infty$, so there is a $q\leq p_\xi$ and an $N$ so that $q\forces_\C\tau_\xi\cap \check{X}_\xi\subseteq\langle \check{e}_0,\ldots,\check{e}_N\rangle$, contradicting that
	\[
		\exists r\leq q\exists x>N(x\in X_\xi \land r\forces_\C \check{x}\in\tau_\xi).\qedhere
	\]
\end{proof}

Given a notion of forcing $\P$, we say that a mad family of subspaces $\LA$ is \emph{$\P$-indestructible} if $\LA$ remains maximal after forcing with $\P$. The proof of Theorem \ref{thm:cohen} above shows that, assuming $\CH$, there is a $\C$-indestructible mad family of subspaces.

Let $\mathbb{S}$ be \emph{Sacks forcing}, the collection of all perfect subtrees of $2^{<\omega}$, ordered by inclusion. $\mathbb{S}$ enjoys the \emph{Sacks property} (cf.~Lemma 2.1 in \cite{MR556894}): whenever $p\in\mathbb{S}$ and $\dot{g}$ is an $\mathbb{S}$-name for an element of $\omega^\omega$, there is a $q\leq p$ and a function $F:\omega\to\LP(\omega)$ such that for all $n$, $|F(n)|\leq 2^n$ and $q\forces\forall n(\dot{g}(n)\in F(n))$. It follows that $\mathbb{S}$ is \emph{$\omega^\omega$-bounding}: every element of $\omega^\omega$ in the generic extension is bounded by some element of the ground model. We note that $\mathbb{S}$ is proper.\footnote{See, e.g., \cite{MooreYST} for more details on properness.}

\begin{thm}\label{thm:sacks_indes}
	\textup{($\CH$)} If $\P$ is a proper poset of size $\aleph_1$ having the Sacks property, then there is a $\P$-indestructible mad family of block subspaces.
\end{thm}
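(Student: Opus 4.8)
The plan is to build the family $\LA=\{X_\xi:\xi<\omega_1\}$ of block subspaces by transfinite recursion under $\CH$, keeping its members pairwise almost disjoint via repeated use of Lemma \ref{lem:extend}, exactly as in the proof of Theorem \ref{thm:cohen}. By definition, $\P$-indestructibility means $\LA$ stays maximal in every $\P$-generic extension, so it suffices to arrange that for every $\P$-name $\dot Y$ for an infinite-dimensional (block) subspace, no condition forces $\dot Y$ to be almost disjoint from all of $\LA$. The difficulty, compared with Theorem \ref{thm:cohen}, is that $\P$ need not be ccc, so there are up to $2^{\aleph_1}$ nice names for subsets of $E$ and we cannot simply enumerate them in an $\omega_1$-recursion. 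This is precisely where the Sacks property enters.

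First I would prove a capturing lemma: given a name $\dot Y$ with reduced-echelon block basis $(\dot y_n)$ and any condition $p$, since $E$ is countable I can regard $(\dot y_n)$ as a name for an element of $\omega^\omega$, and the Sacks property yields a $q\le p$ together with a ground-model function $n\mapsto S(n)$, $|S(n)|\le 2^n$, such that $q\forces \dot y_n\in S(n)$ for all $n$. Thus $q$ forces $\dot Y$ to be the span of a branch through the finitely branching, ground-model tree $T_S$ of finite reduced-echelon block sequences compatible with the slalom $S$. Since this can be done below every condition, the capturing conditions are dense, so some such $q$ lands in any generic filter. Crucially, there are only $\aleph_1$ pairs $(q,S)$: $|\P|=\aleph_1$, and a slalom is a countable object, so under $\CH$ there are $\aleph_1^{\aleph_0}=\aleph_1$ of them. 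This reduces the $2^{\aleph_1}$ names to $\aleph_1$ ground-model ``traces,'' restoring the bookkeeping.

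The core of the construction is then to meet, with members of $\LA$, every branch of every captured tree. Fix a bookkeeping enumerating the $\aleph_1$ traces cofinally among the stages $\xi<\omega_1$. At each stage the family built so far is countable, so Lemma \ref{lem:extend} (together with Lemma \ref{lem:disj_subs1}) lets me adjoin a new block subspace almost disjoint from all previous members; I use these new members to ``catch'' the branches of the trace currently being served, arranging that every branch $b$ of every $T_S$ eventually has $\langle b\rangle$ not almost disjoint from some $X_\xi$. In $V[G]$, if some $\dot Y$ were almost disjoint from all of $\LA$, then below some $q\in G$ it is the span of a branch $b$ of a ground-model tree $T_S$; that branch spans a subspace almost disjoint from $\LA$, contradicting the catching. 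Because $b$ ranges over branches of the \emph{ground-model} tree $T_S$, the catching condition is (absolutely) a statement about branches through $T_S$, and the capturing guarantees that even the new, generic branch realized by $\dot Y$ stays inside $T_S$; this is exactly why the $2^n$ branching bound, rather than mere $\omega^\omega$-boundedness, is needed.

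The hard part will be this catching step. A \emph{single} new member cannot meet all branches of a tree with $2^n$ nodes at level $n$ --- an easy diagonalization defeats any fixed proper block subspace --- so the branches must be caught collectively and across cofinally many stages, while every newly added member must remain almost disjoint from the entire (countable, at each stage) family already built. Reconciling ``thinness'' (almost-disjointness) with ``covering all branches, including generic ones'' is the delicate point; I expect to handle it by exploiting the $2^n$ bound to organize the branches of each $T_S$ into a structure coverable by a pairwise-almost-disjoint family built under $\CH$ --- in effect a block-subspace analogue of an infinitely-often-equal covering family that is preserved by the Sacks property --- so that the generic branch is necessarily caught. Verifying that such a family can be maintained while simultaneously serving all $\aleph_1$ traces is the heart of the proof; properness is used only to preserve $\omega_1$, so that the recursion length is correct, and to license the Sacks-property capturing.
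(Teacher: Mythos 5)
Your reduction is the right one and matches the paper's: use properness plus $\CH$ to cut the class of relevant names down to $\aleph_1$ many pairs $(p_\xi,\tau_\xi)$, and use the Sacks property to capture each name for a block sequence by a ground-model slalom with $|F(n)|\leq 2^n$. But the step you explicitly defer --- the ``catching'' --- is exactly where the theorem is proved, and the route you sketch for it is not the one that works. You propose to meet \emph{every branch} of the captured tree $T_S$ collectively, across cofinally many stages, and you justify the need for a collective attack by noting that a single block subspace cannot meet all branches of a $2^n$-branching tree. That objection is aimed at the wrong target, and the collective plan has a real obstacle: $T_S$ is finitely branching but still acquires \emph{new} branches in the extension (Sacks forcing adds new branches to $2^{<\omega}$), so catching all ground-model branches proves nothing, and the statement ``every branch of $[T_S]$ is non--almost-disjoint from some member of $\LA$'' is not a $\Pi^1_1$ statement about a countable family that you could transfer upward by absoluteness. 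You gesture at ``a block-subspace analogue of an infinitely-often-equal covering family preserved by the Sacks property,'' but nothing of the sort is constructed, and you yourself flag this as the unverified heart of the argument.

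The paper's proof catches the generic branch with a \emph{single} new member $X_\alpha$, and the mechanism is a width-versus-branching tradeoff that your sketch does not exploit. One first uses $\omega^\omega$-bounding to find ground-model intervals $(J_n)$ each forced to contain the support of some vector of $\tau_\alpha$, and then chooses intervals $K_n$ containing at least $2^n$ of the $J_m$ (all above a bounding function $m(n)$ that guarantees almost disjointness from the earlier $X_\xi$). The Sacks property is applied not to the sequence of individual vectors but to the finite sets $\{\dot v_k:\dot I_k\subseteq K_n\}$: there are at most $2^n$ candidate values $A_0,\dots,A_{|F(n)|-1}$ for this set, while \emph{each} candidate contains vectors with supports in at least $2^n$ pairwise disjoint intervals $J_m$. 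This lets one choose, in block position, one representative $u_n^i\in A_i$ from every candidate. Whatever value the generic realizes at level $n$ is some $A_i$, so $X_\alpha$ shares a vector of $\tau_\alpha$ inside every $K_n$, forcing $\dim(\langle\tau_\alpha\rangle\cap\langle X_\alpha\rangle)=\infty$; no claim about meeting all branches is ever needed. Without this step (or a worked-out substitute for it), your argument does not go through. You should also make explicit how the new member stays almost disjoint from the previously built countable family even though its vectors are only known up to the $2^n$ candidates --- in the paper this is what the names $\dot g,\dot h$, the bounding function $m$, and the placement of $K_n$ above $m(n)$ are for.
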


\begin{proof}
	Using $\CH$ and properness, we can construct a sequence of pairs $(p_\xi,\tau_\xi)$, $\xi<\omega_1$, so that:
	\begin{enumerate}[label=\textup{(\roman*)}]
		\item $\tau_{\xi}$ is a nice $\P$-name for an infinite block sequence in $E$, with all antichains occurring in $\tau_\xi$ countable, and
		\item $p_\xi\in\P$ is such that if there are $\tau$ and $p\in\P$ forcing that $\tau$ is an infinite block sequence, then there is a $\xi$ such that $p_\xi\leq p$ and $p_\xi\forces\tau =\tau_\xi$.
	\end{enumerate}
	
	We construct a family of block sequences $\LA=\{X_\alpha:\alpha<\omega_1\}$ recursively as follows: Begin by letting $\{X_i:i\in\omega\}$ be any almost disjoint family of block sequences (i.e., the corresponding subspaces are almost disjoint).
	
	At stage $\alpha\geq\omega$: If
	\[
		p_\alpha\not\forces\forall\xi<\alpha(\dim(\langle\tau_\alpha\rangle\cap\langle\check{X}_\xi\rangle)<\infty),
	\]
	then choose $X_\alpha$ to be any block sequence almost disjoint from all of the $X_\xi$ for $\xi<\alpha$. Otherwise, enumerate by $(\dot{v}_n)$ and $(\dot{I}_n)$ $\P$-names for vectors (in block position) and intervals containing their supports, respectively, which are forced by $p_\alpha$ to make up $\tau_\alpha$. Enumerate $\alpha$ as $(\xi_n)_{n<\omega}$.
	
	As the $X_{\xi_n}$ are almost disjoint, there is an $f\in\omega^\omega$ so that for all $n$, $X_{\xi_0}/f(0),\ldots,X_{\xi_n}/f(n)$ are disjoint. By our assumption on $p_\alpha$, there is a $\P$-name $\dot{g}$ for an element of $\omega^\omega$ so that
	\[
		p_\alpha\forces\forall n(\langle\tau_\alpha/\dot{g}(n)\rangle\cap\langle \check{X}_{\xi_n}\rangle=\{0\}).
	\]
	
	\begin{claim}
		If $Y_0,\ldots,Y_n,Y_{n+1}$ are disjoint block sequences and $x_0<\cdots<x_n$ so that for all $k\leq n$, $\langle x_0,\ldots,x_n\rangle\cap\langle Y_k\rangle=\{0\}$, then there is an $M$ so that whenever $x>M$ and not in any of $\langle Y_0\rangle,\ldots,\langle Y_n\rangle,\langle Y_{n+1}\rangle$, then for all $k\leq n+1$, $\langle x_0,\ldots,x_n,x\rangle\cap\langle Y_k\rangle=\{0\}$.
	\end{claim}
	
	\begin{proof}[Proof of claim.]
		See the proof of Lemma \ref{lem:disj_subs2}.
	\end{proof}
	
	By the claim, there is a $\P$-name $\dot{h}$ for an element of $\omega^\omega$ so that 
	\begin{align*}
		p_\alpha\forces &\forall n[(i_0<\cdots<i_n \text{ and } \dot{h}(0)<\dot{v}_{i_0},\ldots,\dot{h}(n)<\dot{v}_{i_n})\\
		&\Rightarrow \forall k\leq n\langle \dot{v}_{i_0},\ldots,\dot{v}_{i_n}\rangle\cap\langle \check{X}_{\xi_k}/\check{f}(k)\rangle=\{0\}].
	\end{align*}

	As $\P$ is $\omega^\omega$-bounding, there is a $p\leq p_\alpha$, and a function $m\in\omega^\omega$ so that
	\[
		p\forces\forall n(m(n)\geq\max\{\check{f}(n),\dot{g}(n),\dot{h}(n)\}),
	\]
	and so $p$ forces that $m$ shares the relevant properties of $f$, $\dot{g}$, and $\dot{h}$ above. Further, by $\omega^\omega$-bounding, there is an increasing sequence of intervals $(J_n)_{n<\omega}$, and a $p'\leq p$, so that
	\[
		p'\forces\forall n\exists m(\dot{I}_m\subseteq J_n).
	\]
	Choose a further increasing sequence of intervals $(K_n)_{n<\omega}$ so that $K_n$ contains at least $2^n$ many intervals of the form $J_m$, all of which are above $m(n)$.
	
	By the Sacks property, there is a $p''\leq p$ and a function $F$ with domain $\omega$ so that for each $n$, $|F(n)|\leq 2^n$ and each element of $F(n)$ is a collection of vectors in $E$, in block position, so that 
	\[
		p''\forces\forall n(\{\dot{v}_k:\dot{I}_k\subseteq \check{K}_n\}\in \check{F}(n)),
	\] 
	and for all $n$ and $A\in F(n)$, there is a $q\leq p''$ with
	\[
		q\forces\{\dot{v}_k:\dot{I}_k\subseteq \check{K}_n\}=\check{A}.
	\]
	
	For each $n$, let $A_0,\ldots, A_{|F(n)|-1}$ enumerate $F(n)$. We pick vectors $u^0_n$ recursively as follows: Let $u^0_n$ be the first element of $A_0$. Having defined $u^0_n<\cdots<u^j_n$, with $u^i_n\in A_i$, choose $u^{j+1}_n$ to the first element of $A_{j+1}$ with support above $u^j_n$. Note that this can be done as each $A_k$ must contain elements with supports in each of $2^n$ distinct intervals $J_m$. Let $X_\alpha=(u_0^0,\ldots,u^{|F(0)|-1}_0,u_1^0,\ldots,u_1^{|F(1)|-1},\ldots)$. Observe that our choice of $m$ ensures that $X_\alpha$ is a block sequence and is almost disjoint from each $X_\xi$ for $\xi<\alpha$. That
	\[
		p''\forces\dim(\langle \tau_\alpha\rangle\cap\langle X_\alpha\rangle)=\infty
	\]
	is ensured by the construction. It is then easy to show that $\LA=\{\langle X_\alpha\rangle:\alpha<\omega_1\}$ is forced to be a mad family of subspaces by any condition in $\P$.
\end{proof}

By the \emph{Sacks model}, we mean the generic extension of a model of $\CH$ obtained by forcing with a countable support iteration of Sacks forcing of length $\omega_2$, see e.g., \cite{MR556894} or \cite{MR1900391}. Theorem \ref{thm:sacks} below is also a corollary of Theorem \ref{thm:cohen} and a general theorem of Zapletal (Theorem 0.2 in \cite{MR1978947}), though the latter makes use of large cardinals which are not necessary here.

\begin{thm}\label{thm:sacks}
	In the Sacks model, $\aleph_1=\a_{\mathrm{vec},F}<\c$.	
\end{thm}

\begin{proof}
	This is proved using Theorem \ref{thm:sacks_indes}, exactly as Theorem III.2 in \cite{MR1900391}, which the reader may consult for details.
\end{proof}

We note that it follows directly from Theorem \ref{thm:sacks_indes} that in the model obtained by forcing over a model of $\CH$ with the ``side-by-side''  (i.e., countable support product of) Sacks forcing \cite{MR794485} of length $\omega_2$, $\a_{\mathrm{vec},F}=\aleph_1$ as well. This is because any reals added in the side-by-side model are added by a product of $\omega_1$ many copies of Sacks forcing, which is proper, has the Sacks property, and preserves $\CH$ in the intermediate model.

Lastly, following \cite{MR0307913}, we turn to the problem of producing a ``large spectrum'' of cardinalities of mad families of subspaces. Given an uncountable regular cardinal $\kappa$, let
\[
	\LD_\kappa = \{(\alpha,\beta)\in\kappa\times\kappa:\alpha\text{ is an uncountable limit ordinal and }\beta<\alpha\}.
\]
Let $\Q_\kappa$ be the set of all functions $p:F_p\times n_p\to E$ where $F_p\in[\LD_\kappa]^{<\omega}$, $n_p\in\omega$, and for each $(\alpha,\beta)\in F_p$, $(p(\alpha,\beta,0),\ldots,p(\alpha,\beta,n_p-1))$ is a block sequence in $E$. We say $q\leq p$ if $q\supseteq p$ and whenever $(\alpha,\beta),(\alpha,\gamma)\in F_p$ with $\beta\neq\gamma$, we have that
\[
	\langle(q(\alpha,\beta,i))_{i<n_q}\rangle\cap\langle(q(\alpha,\gamma,i))_{i<n_q}\rangle=\langle(p(\alpha,\beta,j))_{j<n_p}\rangle\cap\langle(p(\alpha,\gamma,j))_{j<n_p}\rangle.
\]

\begin{thm}\label{thm:hechler}
	Let $\kappa$ be an uncountable regular cardinal. If $G$ is $\V$-generic for $\Q_\kappa$, then in $\V[G]$, for every uncountable cardinal $\lambda<\kappa$ there is a mad family of block subspaces of $E$ of cardinality $\lambda$. In this model, $\kappa\leq\cc\leq(\kappa^{\aleph_0})^\V$.
\end{thm}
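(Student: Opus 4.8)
The plan is to read off from the generic $G$ a single function $g=\bigcup G:\LD_\kappa\times\omega\to E$ and, for each $(\alpha,\beta)\in\LD_\kappa$, to set $X^\alpha_\beta=\langle(g(\alpha,\beta,i))_{i\in\omega}\rangle$; the candidate family of size $\lambda$ will be $\LA_\lambda=\{X^\lambda_\beta:\beta<\lambda\}$, using that every uncountable cardinal $\lambda<\kappa$ is an uncountable limit ordinal, so $(\lambda,\beta)\in\LD_\kappa$ for all $\beta<\lambda$. First I would record the basic structural facts. A density argument shows each $X^\alpha_\beta$ is an infinite-dimensional block subspace, and the clause in the definition of $\leq$ that \emph{freezes} the pairwise intersection $\langle(q(\alpha,\beta,i))_i\rangle\cap\langle(q(\alpha,\gamma,i))_i\rangle$ as one extends guarantees that, once a condition contains both $(\alpha,\beta)$ and $(\alpha,\gamma)$, the intersection $X^\alpha_\beta\cap X^\alpha_\gamma$ equals a fixed finite-dimensional space; hence each $\LA_\alpha$ is almost disjoint, and (by a further density argument making them pairwise distinct) $|\LA_\lambda|=\lambda$. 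I would also check that $\Q_\kappa$ is ccc by a $\Delta$-system argument: given uncountably many conditions, first thin out to a fixed value of $n_p$, then to a $\Delta$-system in the finite sets $F_p$ with root $R$, and finally to a fixed restriction $p\restr(R\times n_p)$ (only countably many possibilities, as $E$ is countable); any two surviving conditions have a common extension given by their union, since they agree on the root and share the same length, so no frozen intersection is disturbed.

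The cardinal arithmetic then follows from standard bookkeeping. Since $|[\LD_\kappa]^{<\omega}|=\kappa$ and each condition attaches finitely many vectors from the countable set $E$, we have $|\Q_\kappa|=\kappa$; being ccc, every real in $\V[G]$ has a nice name built from countably many countable antichains, so there are at most $(\kappa^{\aleph_0})^{\aleph_0}=(\kappa^{\aleph_0})^\V$ of them, giving $\cc\leq(\kappa^{\aleph_0})^\V$. Conversely, for distinct coordinates the set of conditions forcing $X^\alpha_\beta\neq X^{\alpha'}_{\beta'}$ is dense, so $G$ yields $|\LD_\kappa|=\kappa$ distinct subspaces and hence $\kappa\leq\cc$.

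The heart of the proof is the maximality of $\LA_\lambda$. Suppose toward a contradiction that some $p$ forces $\dot Y$ to be an infinite-dimensional subspace almost disjoint from every $X^\lambda_\beta$. By ccc, $\dot Y$ has a countable support $S\subseteq\LD_\kappa$, and since $\lambda$ is uncountable I may fix $\beta<\lambda$ with $(\lambda,\beta)\notin S$. For each $N$ I would consider the set $D_N$ of conditions $q\leq p$ with $(\lambda,\beta)\in F_q$ that have placed into the block sequence at $(\lambda,\beta)$ a vector $v$ with $\max(\supp(v))>N$ for which $q\forces\check v\in\dot Y$. A filter meeting every $D_N$ produces infinitely many linearly independent vectors in $\dot Y\cap X^\lambda_\beta$, contradicting almost disjointness; so it suffices to prove each $D_N$ is dense below $p$. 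Given $r\leq p$ with $(\lambda,\beta)\in F_r$, let $\gamma_1,\dots,\gamma_j$ enumerate the finitely many $\gamma\neq\beta$ with $(\lambda,\gamma)\in F_r$. Because $\dot Y$ is forced infinite-dimensional and almost disjoint from each $X^\lambda_{\gamma_i}$, and because an infinite-dimensional space is not covered by the finitely many fixed finite-dimensional spaces $\langle(\lambda,\beta)\text{-seq}\rangle+\langle(\lambda,\gamma_i)\text{-seq}\rangle$, I can extend $r$ to some $r'$ deciding a vector $v\in\dot Y$ whose support lies above the current $(\lambda,\beta)$-sequence and which avoids these spaces. I then form $q\leq r'$ by appending $v$ to the $(\lambda,\beta)$-coordinate and appending vectors of sufficiently high, pairwise-disjoint support (furnished by Lemma~\ref{lem:extend} and Lemma~\ref{lem:disj_subs1}) to the remaining coordinates to equalize lengths; the high supports ensure that all frozen intersections are literally preserved, so $q$ is a legitimate condition in $D_N$.

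The step I expect to be the main obstacle is precisely this last extension, where the \emph{uniform length} $n_q$ and the same-$\alpha$ coupling in the order conspire: appending $v$ to $X^\lambda_\beta$ forces me to extend every other coordinate simultaneously, and I must do so without enlarging any frozen intersection $X^\lambda_\beta\cap X^\lambda_{\gamma_i}$. The resolution is to keep $(\lambda,\beta)\notin S$ so that the value of $\dot Y$ is unaffected by what I place at $(\lambda,\beta)$, to use the almost disjointness of $\dot Y$ from each $X^\lambda_{\gamma_i}$ to choose $v$ outside the relevant finite-dimensional spans, and to invoke Lemma~\ref{lem:extend} to bound how high the newly appended vectors must be so that no new vector enters any pairwise intersection. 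Making the ``countable support'' and ``keep $(\lambda,\beta)$ free'' steps fully rigorous---that is, checking that $\Q_\kappa\restr S$ is a complete subforcing and that one may decide membership in $\dot Y$ while leaving the $(\lambda,\beta)$-coordinate at one's disposal---is the technical crux, but it is exactly the mechanism by which the freezing order was designed to work.
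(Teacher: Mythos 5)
Your proposal follows essentially the same route as the paper: the same poset with the ``frozen intersection'' ordering, the same $\Delta$-system ccc argument, the family $\{\langle G_{\lambda,\beta}\rangle:\beta<\lambda\}$ read off the generic, and maximality proved by fixing a countable support $S$ for the name, choosing a coordinate $(\lambda,\beta)\notin S$, and showing density of conditions that append to that coordinate a vector forced into $\dot Y$ with high support, using Lemma \ref{lem:extend} to equalize lengths without disturbing frozen intersections. The technical crux you flag (deciding membership in $\dot Y$ below the restriction of a condition to the $S$-coordinates, then amalgamating) is exactly how the paper handles it, so the argument is correct as outlined.
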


Typically, $\kappa=\kappa^{\aleph_0}$ and so $\cc=\kappa$ in the extension. Thus, it is consistent that $\cc>\aleph_2$ (or even $\cc>\aleph_{\omega_1}$, etc) and for every uncountable cardinal $\lambda\leq\cc$, there is a mad family of size $\lambda$. We will proceed with a series of lemmas.

\begin{lemma}
	$\Q_\kappa$ is ccc and $|\Q_\kappa|=\kappa$.	
\end{lemma}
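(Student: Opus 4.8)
The plan is to prove the two assertions separately: that $\Q_\kappa$ is ccc, and that $|\Q_\kappa|=\kappa^{\aleph_0}$. I expect the cardinality computation to be almost immediate, while the ccc verification is the substantive part.

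First I would handle the cardinality. A condition $p$ is determined by its finite domain $F_p\in[\LD_\kappa]^{<\omega}$, the integer $n_p$, and the finitely many vector-values $p(\alpha,\beta,i)\in E$. Since $|\LD_\kappa|=\kappa$ and $E$ is countable, the number of choices of a finite domain is $\kappa$ (as $\kappa$ is infinite, $|[\LD_\kappa]^{<\omega}|=\kappa$), and for each fixed finite domain and fixed $n_p$ there are at most countably many ways to assign the values in $E$. Summing over the countably many values of $n_p$ and over the $\kappa$-many finite domains gives an upper bound of $\kappa\cdot\aleph_0=\kappa$ for conditions with bounded information; but because each condition carries a function on a set of size $|F_p|\cdot n_p$ into the countable set $E$, the total count is $\kappa^{\aleph_0}$. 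More precisely, I would note $|\Q_\kappa|\le\sum_{F\in[\LD_\kappa]^{<\omega}}\sum_{n\in\omega}|E|^{|F|\cdot n}\le\kappa\cdot\aleph_0\cdot\aleph_0=\kappa$, which shows $|\Q_\kappa|=\kappa$ when $\kappa=\kappa^{\aleph_0}$, but the stated value $\kappa^{\aleph_0}$ is the correct general bound obtained by observing the number of nice names / the worst case coincides with the number of countable-information objects; I would reconcile this by computing the exact supremum directly and remarking $\kappa\le|\Q_\kappa|\le\kappa^{\aleph_0}$ with the lower bound coming from the $\kappa$-many distinct singleton conditions.

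The main obstacle is the ccc argument. The plan is a $\Delta$-system argument. Suppose $\{p_\eta:\eta<\omega_1\}$ is an uncountable subset of $\Q_\kappa$. By refining to an uncountable subset I would first arrange, using the fact that there are only countably many values of $n_p$, that all $n_{p_\eta}$ equal a common $n$. Applying the $\Delta$-system lemma to the finite sets $\{F_{p_\eta}:\eta<\omega_1\}\subseteq[\LD_\kappa]^{<\omega}$, I obtain an uncountable subfamily whose domains form a $\Delta$-system with some finite root $R\subseteq\LD_\kappa$. Since each condition restricted to $R$ is a function from $R\times n$ into the countable set $E$, and there are only countably many such functions, I can refine once more to an uncountable subfamily on which all conditions agree on the root. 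The key point is then to show any two conditions $p_\eta,p_\zeta$ in this refined family are compatible: I would define $q=p_\eta\cup p_\zeta$, whose domain is $(F_{p_\eta}\cup F_{p_\zeta})\times n$, and check it is a legitimate condition extending both.

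The crux is verifying that $q$ satisfies the ordering requirement, namely that the pairwise intersections of block-spans are preserved. The compatibility condition in the ordering only constrains pairs $(\alpha,\beta),(\alpha,\gamma)$ sharing the same first coordinate $\alpha$; the essential observation is that for $\eta\ne\zeta$ outside the root, the first coordinates $\alpha$ appearing in $F_{p_\eta}\setminus R$ and those in $F_{p_\zeta}\setminus R$ need not be disjoint, so I must argue more carefully. The clean way is to strengthen the $\Delta$-system refinement so that the \emph{first coordinates} of the non-root parts are disjoint across the two conditions; this is achievable because for a fixed uncountable limit ordinal $\alpha$ only countably many $\beta<\alpha$ can occur across a given condition, so by a further thinning (or by treating the projections to the first coordinate as the objects of the $\Delta$-system) I ensure that no $\alpha$ is shared between the non-root parts of distinct conditions. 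Then for any pair in $q$ with a common first coordinate $\alpha$, either both entries come from $p_\eta$, or both from $p_\zeta$, or both lie in the root; in every case the required span-intersection equality is inherited directly from $p_\eta$ or $p_\zeta$ (or trivially from the common root values), since $q$ adds no new vectors in the relevant coordinate slot — the domains have the same height $n_q=n=n_p$, so $q$ is an \emph{end-extension only in the base coordinates}, not in the block-sequence length. Hence $q\le p_\eta$ and $q\le p_\zeta$, establishing compatibility and therefore the ccc.
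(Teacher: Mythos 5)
Your overall strategy is the paper's: thin to a constant $n$, apply the $\Delta$-system lemma to the domains, use the countability of $E$ to stabilize the values on the root, and take unions. But your handling of the ``crux'' inserts a step that is both unnecessary and, as stated, false. You claim you can refine further so that no $\alpha$ occurs as a first coordinate in the non-root parts of two distinct conditions. This is not achievable in general: take $\kappa\geq\omega_2$, fix $\alpha=\omega_1$, and let $F_{p_\eta}=\{(\omega_1,\beta_\eta)\}$ for uncountably many distinct $\beta_\eta<\omega_1$ (with constant $n$ and constant values). These domains form a $\Delta$-system with empty root, yet every two conditions share the first coordinate $\omega_1$ outside the root; no uncountable --- indeed no two-element --- subfamily avoids this.

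Fortunately the step is not needed, because the difficulty it addresses is not there. The clause in the definition of $q\leq p$ quantifies only over pairs $(\alpha,\beta),(\alpha,\gamma)\in F_p$, i.e., pairs already in the domain of the \emph{weaker} condition. Since $n_q=n_{p_\eta}=n_{p_\zeta}=n$ and $q$ restricted to $F_{p_\eta}\times n$ is literally $p_\eta$, both sides of the required span-intersection equality are computed from exactly the same finite block sequences, so the requirement holds trivially (and likewise for $p_\zeta$). Pairs straddling the two conditions, even with a common first coordinate, are simply not constrained when verifying $q\leq p_\eta$ or $q\leq p_\zeta$; they only become relevant for future extensions of $q$. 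Deleting your extra refinement leaves precisely the paper's argument. On the cardinality, your count $|\Q_\kappa|\leq\kappa\cdot\aleph_0\cdot\aleph_0=\kappa$ is correct --- conditions are finite objects over a set of size $\kappa$, so in fact $|\Q_\kappa|=\kappa$ --- and the stated value $\kappa^{\aleph_0}$ is accurate only under the (typical, and later invoked) hypothesis $\kappa=\kappa^{\aleph_0}$; your hedged conclusion $\kappa\leq|\Q_\kappa|\leq\kappa^{\aleph_0}$ is the honest general statement, and nothing downstream needs more.
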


\begin{proof}
	Suppose that $\{p_\xi:\xi<\aleph_1\}\subseteq\Q_\kappa$. By thinning down, we may assume that there is some fixed $n$ for which $n_{p_\xi}=n$ for all $\xi<\aleph_1$. By the $\Delta$-system lemma, we may further thin down so that the $F_{p_\xi}$ form a $\Delta$-system, that is, there is some finite set $R\subseteq\LD_\kappa$ for which $F_{p_\xi}\cap F_{p_\eta}=R$ for all $\xi\neq\eta<\aleph_1$. But as there are only countably many functions $R\times n\to E$, uncountably many of the $p_\xi$ agree on $R\times n$. Given such $p_\xi$ and $p_\eta$, it is then immediate that $q=p_\xi\cup p_\eta$ is a common extension. That $|\Q_\kappa|=\kappa$ is easy to check.	
\end{proof}

\begin{lemma}\label{lem:extend_F_p}
	Let $p\in\Q_\kappa$. For any $(\alpha,\beta)\in\LD_\kappa$, there is a $q\leq p$ with $(\alpha,\beta)\in F_q$.
\end{lemma}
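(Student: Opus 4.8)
The plan is to split on whether $(\alpha,\beta)$ already belongs to $F_p$. If $(\alpha,\beta)\in F_p$, then $q=p$ works trivially, so assume $(\alpha,\beta)\notin F_p$. The idea is to adjoin $(\alpha,\beta)$ as a new coordinate at the same ``height'' $n_p$, assign to it an arbitrary block sequence, and check that no compatibility obligation is thereby incurred.

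Concretely, I would set $n_q=n_p$, $F_q=F_p\cup\{(\alpha,\beta)\}$, and define $q$ to agree with $p$ on $F_p\times n_p$ while letting $(q(\alpha,\beta,0),\ldots,q(\alpha,\beta,n_p-1))$ be any finite block sequence in $E$ of length $n_p$, for instance $(e_0,\ldots,e_{n_p-1})$. This $q$ is a legitimate element of $\Q_\kappa$: we have $F_q\in[\LD_\kappa]^{<\omega}$, and each coordinate of $F_q$ now carries a block sequence of the common length $n_q=n_p$.

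It remains to verify $q\leq p$. Certainly $q\supseteq p$. For the remaining requirement, observe that the ordering only constrains pairs of coordinates lying in $F_p$: it demands, for distinct $(\alpha',\beta'),(\alpha',\gamma')\in F_p$, that the intersection $\langle(q(\alpha',\beta',i))_{i<n_q}\rangle\cap\langle(q(\alpha',\gamma',i))_{i<n_q}\rangle$ coincide with the corresponding intersection computed from $p$. Since $n_q=n_p$ and $q$ restricts to $p$ on $F_p\times n_p$, these two intersections are literally equal, so the condition holds and $q\leq p$.

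The one point worth flagging is precisely this last observation, which is where an apparent obstacle dissolves. One might expect to have to choose the block sequence assigned to $(\alpha,\beta)$ so as to be suitably almost disjoint from the existing members of column $\alpha$; indeed this is where Lemma \ref{lem:disj_subs1} and Lemma \ref{lem:disj_subs2} would be invoked if it were needed. But the order relation imposes no demand whatsoever on a coordinate lying outside $F_p$, so any block sequence may be used, and the genuine almost-disjointness constraints are enforced only later, when the height $n_p$ is increased.
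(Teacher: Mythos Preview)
Your proof is correct and follows essentially the same approach as the paper's: assume $(\alpha,\beta)\notin F_p$, set $F_q=F_p\cup\{(\alpha,\beta)\}$ and $n_q=n_p$, and assign an arbitrary block sequence to the new coordinate. Your additional verification that the ordering condition only concerns pairs already in $F_p$ (and hence is trivially preserved) is a helpful elaboration of what the paper leaves implicit.
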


\begin{proof}
	If $(\alpha,\beta)\notin F_p$, we can define $q\leq p$ so that $F_q=F_p\cup\{(\alpha,\beta)\}$, $n_q=n_p$, and $(q(\alpha,\beta,0),\ldots,q(\alpha,\beta,n_q-1))$ any block sequence in $E$ whatsoever.
\end{proof}

\begin{lemma}\label{lem:extend_n_p}
	Let $p\in\Q_\kappa$. For any $M>0$, there is a $q\leq p$ so that $n_q=n_p+1$ and $q(\alpha,\beta,n_p)>M$ for all $(\alpha,\beta)\in F_q$.
\end{lemma}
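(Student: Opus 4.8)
The plan is to extend each of the finitely many block sequences recorded by $p$ by a single fresh vector, chosen so that no pairwise intersection grows. Concretely, let $L$ be the maximum of $M$ and of all coordinates appearing in the supports of the vectors in the range of $p$; since $F_p$ is finite and each vector has finite support, $L$ is well-defined (with $L=M$ in case $n_p=0$). For each pair $(\alpha,\beta)\in F_p$ I would choose a distinct index $j_{\alpha,\beta}>L$, which is possible because $F_p$ is finite, and append the unit vector $e_{j_{\alpha,\beta}}$. Formally, define $q$ by $F_q=F_p$, $n_q=n_p+1$, $q\restr(F_p\times n_p)=p$, and $q(\alpha,\beta,n_p)=e_{j_{\alpha,\beta}}$.

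Next I would check that $q\in\Q_\kappa$. The only thing to verify is that each $(q(\alpha,\beta,0),\ldots,q(\alpha,\beta,n_p))$ is a block sequence. Its initial segment is a block sequence because it comes from $p$, and the appended vector $e_{j_{\alpha,\beta}}$ has support $\{j_{\alpha,\beta}\}$ with $j_{\alpha,\beta}>L$, hence lies strictly above every earlier vector of the sequence. This also yields $q(\alpha,\beta,n_p)=e_{j_{\alpha,\beta}}>M$, since $j_{\alpha,\beta}>L\geq M$, which is the required inequality.

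The main point is to verify $q\leq p$, i.e., that for distinct $(\alpha,\beta),(\alpha,\gamma)\in F_p$ the intersection of the new spans equals that of the old ones. Write $S_{\alpha,\beta}=\langle(p(\alpha,\beta,i))_{i<n_p}\rangle$ and $S_{\alpha,\gamma}$ analogously. The inclusion $S_{\alpha,\beta}\cap S_{\alpha,\gamma}\subseteq\langle S_{\alpha,\beta},e_{j_{\alpha,\beta}}\rangle\cap\langle S_{\alpha,\gamma},e_{j_{\alpha,\gamma}}\rangle$ is trivial. For the reverse, I would take $w$ in the right-hand side and write $w=s+\lambda e_{j_{\alpha,\beta}}=t+\mu e_{j_{\alpha,\gamma}}$ with $s\in S_{\alpha,\beta}$ and $t\in S_{\alpha,\gamma}$. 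Then $s-t=\mu e_{j_{\alpha,\gamma}}-\lambda e_{j_{\alpha,\beta}}$; the left-hand side is supported in coordinates $\leq L$, while the right-hand side is supported in $\{j_{\alpha,\beta},j_{\alpha,\gamma}\}$, a set of coordinates $>L$. Comparing supports forces both sides to vanish, and since $j_{\alpha,\beta}\neq j_{\alpha,\gamma}$ this gives $\lambda=\mu=0$ and hence $s=t\in S_{\alpha,\beta}\cap S_{\alpha,\gamma}$, so $w=s$ lies in the old intersection, as needed.

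There is no genuine obstacle here: the construction is completely explicit, and the only delicate point is the support comparison in the last paragraph, which works precisely because the appended vectors are pairwise disjointly supported and lie strictly above everything occurring in $p$. It is worth noting that one cannot append the \emph{same} vector to every sequence, since that would enlarge each intersection by the appended line; choosing \emph{distinct} fresh vectors is exactly what keeps all the intersections fixed and thereby secures $q\leq p$.
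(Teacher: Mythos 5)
Your proof is correct, and it takes a genuinely different route from the paper's. The paper proceeds one pair $(\alpha,\beta_\ell)$ at a time, invoking Lemma \ref{lem:extend} (in its finite-dimensional form) to produce thresholds $N_\ell\geq M$ above which \emph{any} vector avoiding the finitely many relevant spans can be appended without disturbing the pairwise intersections; it must also track the already-extended spans $Y_i'$ at each stage. You instead append distinct fresh unit vectors $e_{j_{\alpha,\beta}}$ with $j_{\alpha,\beta}>L$ all at once, and verify $q\leq p$ by a single direct support comparison: $s-t=\mu e_{j_{\alpha,\gamma}}-\lambda e_{j_{\alpha,\beta}}$ has support in $[0,L]$ on the left and above $L$ on the right, forcing everything to vanish. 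This is shorter, entirely explicit, and avoids any appeal to Lemma \ref{lem:extend}; your closing remark that the appended vectors must be \emph{distinct} correctly identifies the one point where a naive construction would fail. What the paper's more flexible formulation buys is reusability: in the proof of Theorem \ref{thm:hechler} the same argument is rerun with one of the appended vectors \emph{prescribed} to be a particular $v$ forced into $\tau$, and the "any sufficiently high vector avoiding finitely many finite-dimensional spans works" phrasing adapts immediately to that setting, whereas your unit-vector trick proves the lemma as stated but would not directly yield that stronger density statement.
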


\begin{proof}
	Let $q(\alpha,\beta,i)=p(\alpha,\beta,i)$ for $i<n_p$ and $(\alpha,\beta)\in F_p$, as required.	Fix $\alpha$ occurring as a first coordinate in $F_p$. Enumerate by $\beta_0,\ldots,\beta_k$ those $\beta$ with $(\alpha,\beta)\in F_p$. Let $Y_j=\langle p(\alpha,\beta_j,0),\ldots,p(\alpha,\beta_j,n_p-1)\rangle$ for $j\leq k$. By repeated applications of Lemma \ref{lem:extend} (we are applying it to a finite-dimensional space $Y$, however the lemma remains true by essentially the same proof), there is an $N_0\geq M$ so that whenever $x>N_0$ and not in $Y_j$,
	\[
		\langle q(\alpha,\beta_0,0),\ldots,q(\alpha,\beta_0,n_p-1),x\rangle\cap Y_j=Y_0\cap Y_j,
	\]
	for $0<j\leq k$. Let $q(\alpha,\beta_0,n_p)$ be any vector $x>N_0$ and not in $\bigcup_{j\leq k}Y_j$. Let $Y_0'=\langle q(\alpha,\beta_0,0),\ldots,q(\alpha,\beta_0,n_p-1),q(\alpha,\beta_0,n_p)\rangle$.
%
	
	Continue in this fashion, choosing $N_\ell\geq M$ so that whenever $x>N_\ell$ and not in $Y_i'$ or $Y_j$,
	\[
		\langle q(\alpha,\beta_\ell,0),\ldots,q(\alpha,\beta_\ell,n_p-1),x\rangle\cap Y_i'=Y_\ell\cap Y_i'=Y_\ell\cap Y_i,
	\]
	and
	\[
		\langle q(\alpha,\beta_\ell,0),\ldots,q(\alpha,\beta_\ell,n_p-1),x\rangle\cap Y_j=Y_\ell\cap Y_j,
	\]
	for $i<\ell$ and $\ell<j\leq k$. Let $q(\alpha,\beta_\ell,n_p)$ be any vector $x>N_\ell$ and not in $\bigcup_{i<\ell} Y_i'\cup\bigcup_{\ell<j\leq k}Y_j$. At the end of the construction, $q$ will be a condition with domain $F_p\times(n_p+1)$ extending $p$ and having the desired property.
\end{proof}

\begin{proof}[Proof of Theorem \ref{thm:hechler}.]
	Let $G$ be $\V$-generic for $\Q_\kappa$. By Lemmas \ref{lem:extend_F_p} and \ref{lem:extend_n_p}, $\bigcup G:\LD_\kappa\times\omega\to E$ so that for each $(\alpha,\beta)\in\LD_\kappa$, $G_{\alpha,\beta}(\cdot)=\bigcup G(\alpha,\beta,\cdot)$ is an infinite block sequence in $E$.

	Given an uncountable limit $\alpha<\kappa$, we claim that $\langle G_{\alpha,\beta}\rangle\cap\langle G_{\alpha,\gamma}\rangle$ is finite-dimensional, for $\beta\neq\gamma<\alpha$. Let $p\in\Q_\kappa$ be given with $(\alpha,\beta),(\alpha,\gamma)\in F_p$. By the definition of $\leq$ in $\Q_\kappa$, we have that
	\[
		p\forces\langle \dot{G}_{\alpha,\beta}\rangle\cap\langle \dot{G}_{\alpha,\gamma}\rangle=\langle(\check{p}(\alpha,\beta,i))_{j<n_p}\rangle\cap\langle(\check{p}(\alpha,\gamma,i))_{j<n_p}\rangle.
	\]
	Thus, $\langle G_{\alpha,\beta}\rangle\cap\langle G_{\alpha,\gamma}\rangle$ is forced to be finite-dimensional and $\LA_\alpha=\{\langle G_{\alpha,\beta}\rangle:\beta<\alpha\}$ is an almost disjoint family of subspaces. As $\Q_\kappa$ preserves cardinals, $|\LA_\alpha|=|\alpha|$. It remains to show that each $\LA_\alpha$ is maximal.
	
	Fix $\alpha$ as above and let $\tau$ be a nice $\Q_\kappa$-name for a subset of $E$. As $\Q_\kappa$ is ccc, there is a countable set of conditions $A\subseteq\Q_\kappa$ which decides which vectors are in $\tau$ and whether $\tau$ is an infinite-dimensional subspace. That is, if $p\forces\check{v}\in\tau$, for some $v\in E$ and $p\in\Q_\kappa$, then there is a $q\in A$ with $q\forces\check{v}\in\tau$, and likewise if $p\forces\tau\text{ is a subspace}$. $A$ is contained in
	\[
		\Q_{\kappa,S} = \{p\in\Q_\kappa:(\alpha,\gamma)\in F_p\Rightarrow\gamma\in S\}
	\]
	for some countable $S\subseteq\alpha$. Suppose that
	\[
		p\forces \text{$\tau$ is an infinite-dimensional subspace of $E$ and $\forall\gamma\in\check{S}(\dim(\tau\cap \langle \dot{G}_{\alpha,\gamma}\rangle)<\infty)$}
	\]
	for $p\in\Q_{\kappa,S}$. Fix $\xi\in\alpha\setminus S$. We claim that for all $M>0$, the set of conditions $q\in\Q_\kappa$ such that
	\[
		q\forces\exists v>M(v\in\tau\cap\langle \dot{G}_{\alpha,\xi}\rangle)
	\]
	is dense below $p$. Let $p'\leq p$. We may assume that $(\alpha,\xi)\in F_{p'}$. Let $p''=p'\restr(\{(\alpha,\gamma):\gamma\in S\}\times n_{p'})\in\Q_{\kappa,S}$. Then, $p''\leq p$, and so
	\[
		p''\forces \text{$\tau$ is an infinite-dimensional subspace of $E$ and $\forall\gamma\in\check{S}(\dim(\tau\cap \langle \dot{G}_{\alpha,\gamma}\rangle)<\infty)$}
	\]
	By Lemmas \ref{lem:extend} and \ref{lem:extend_n_p}, there is a $p'''\leq p''$ in $\Q_{\kappa,S}$ and a $v>M$ so that
	\[
		p'''\forces \check{v}\in\tau\land \forall (\alpha,\gamma)\in \check{F}_{p''}(\check{v}\notin\langle \dot{G}_{\alpha,\gamma}\rangle),
	\]
	and moreover, there is a condition $q\in\Q_\kappa$ so that $F_q=F_{p'}\cup F_{p'''}$, $n_q=n_{p'''}+1$, $q(\alpha,\xi,n_{p'''})=v$, and $q\leq p'$. But then,
	\[
		q\forces\exists v>M(v\in\tau\cap\langle \dot{G}_{\alpha,\xi}\rangle),
	\]
	as claimed. Thus, $\LA_\alpha$ is forced to be a mad family of subspaces.
	
	That $\cc\leq\kappa^{\aleph_0}$ in $\V[G]$ follows from standard facts about ccc forcing (cf.~Lemma 5.13 of Ch.~VII in \cite{MR597342}).
\end{proof}

\section{Definability and Ramsey theory}\label{sec:def}

In \cite{MR0491197}, Mathias showed that there are no analytic mad families on $\omega$. His proof proceeds by showing that, given an infinite almost disjoint family $\LA$ on $\omega$, the set $\LH$ of subsets of $\omega$ not covered by a finite union of elements of $\LA$ is a selective coideal.\footnote{This is shown for infinite mad families in Proposition 0.7 of \cite{MR0491197}, but the assumption of maximality is not necessary, see Example 2 on p.~35 of \cite{MR1442262}.} Here, a \emph{coideal} is the complement of an ideal of subsets of $\omega$, and \emph{selectivity} refers to closure under a certain kind of diagonalization. Were $\LA$ analytic, an application of the main Ramsey-theoretic dichotomy of \cite{MR0491197} shows that there must be an infinite set $x\in\LH$ none of whose infinite subsets are in the $\subseteq$-downwards closure of $\LA$. Such an $x$ witnesses that $\LA$ fails to be maximal.

We would like to replicate this argument to prove that there are no infinite analytic mad families of subspaces of $E$, considered as subsets of the product space $2^E$. As is the case for mad families on $\omega$, such a result would be sharp: assuming $\V=\L$, the methods in \cite{MR983001} can be adapted to show that there is a coanalytic mad family of subspaces. This na\"ive approach runs into several problems, which we discuss below.

Let's first consider the setting where $F$ is a finite field, in which case almost disjoint sub\emph{spaces} of $E$ are also almost disjoint as sub\emph{sets} of $E$. This suggests the following strategy: Suppose that $\LA$ is an infinite analytic almost disjoint family of subspaces of $E$ and let $\LH$ be the collection of all sub\emph{sets} of $E$ which are not covered by a union of finitely many elements of $\LA$. Then, $\LH$ is a selective coideal of subsets of $E$. Applying Mathias' dichotomy as above, we obtain an infinite subset $X\in\LH$ all of whose further subsets are disjoint from the downwards closure of $\LA$. If $\LA$ were maximal, then we would obtain the desired contradiction provided $X$ contains an infinite-dimensional subspace. However, there is no a priori reason why $X$ ought to contain such a subspace.

In the event that $|F|=2$, hope is provided by Hindman's theorem \cite{MR0349574}, one formulation of which says that the collection $\LB$ of all subsets of $E$ which contain an infinite-dimensional block subspace is a coideal. It would suffice, then, to show that $\LH\cap\LB$ is a selective coideal. As the union of two ideals is an ideal if and only if one contains the other, we would need to have that $\LH\subseteq\LB$ (clearly, $\LB\not\subseteq\LH$). Unfortunately, this is \emph{never} true: take $X\in\LH$ which has infinite intersection with infinitely many elements of $\LA$ and build a block sequence $Y$ in $X$ with the same property. Taken as a set, $Y\in\LH$ but $Y$ contains no subspaces. This argument can be adapted to show that the family of block sequences in $E$ whose spans are in $\LH$ fails to be a coideal in the associated Ramsey space of all block sequences, in the sense of \cite{MR3748482}.

We now turn to a strategy based on the Ramsey-theoretic results in \cite{MR3864398} for block sequences in vector spaces over an arbitrary countable field $F$.

Following \cite{MR3864398}, we let $\bb^\infty(E)$ denote the space of all infinite block sequences in $E$, which inherits a Polish topology from $E^\omega$ that is compatible with the Borel structure of $2^E$. For $X,Y\in\bb^\infty(E)$, we write $X\preceq Y$ if $\langle X\rangle\subseteq\langle Y\rangle$, and $X\preceq^*Y$ if $X/n\preceq Y$ for some $n$, where $X/n$ denotes the tail subsequence of $X$ consisting of those vectors in $X$ with supports above $n$. Note that $\langle X/n\rangle=\langle X\rangle/n$, where the latter was defined for subspaces in \S\ref{sec:card_ZFC}. A nonempty subset of $\bb^\infty(E)$ is a \emph{family} if it is  closed upwards with respect to $\preceq^*$. If $X\in\LH$, we write $\LH\restr X=\{Y\in\LH:Y\preceq X\}$. The key notions from \cite{MR3864398} are as follows:

\begin{defn}
	A family $\LH\subseteq\bb^\infty(E)$ is:
	\begin{enumerate}
		\item a \emph{$(p)$-family}, or has the \emph{$(p)$-property}, if whenever $X_0\succeq X_1\succeq \cdots$ is a decreasing sequence with each $X_n\in\LH$, there is a $Y\in\LH$ with $Y\preceq^* X_n$ for all $n\in\omega$.
		\item \emph{full} if whenever $D\subseteq E$ (not necessarily a subspace) and $X\in\LH$ are such that for every $Y\in\LH\restr X$, there is a $Z\preceq Y$ with $\langle Z\rangle\subseteq D$, then there is a $Z\in\LH\restr X$ with $\langle Z\rangle\subseteq D$.	
		\item a \emph{$(p^+)$-family} if it is full and has the $(p)$-property.
	\end{enumerate}
\end{defn}


\begin{defn}
	The \emph{Gowers game} \cite{MR1954235} played below $X\in\bb^\infty(E)$, denoted $G[X]$, is as follows: Players I and II alternate, with player I going first and playing block sequences $X_k\preceq X$, and player II responding with vectors $y_k\in \langle X_k\rangle$ subject to the constraint $y_k<y_{k+1}$, for $k\in\omega$. The block sequence $(y_k)$ is the \emph{outcome} of a play of the game.
\end{defn}

	A \emph{strategy} for II in $G[X]$ is a function $\alpha$ taking sequences $(X_0,\ldots,X_k)$ of possible prior moves by I to vectors $y\in \langle X_{k}\rangle$, with $\alpha(X_0,\ldots,X_{k-1})<y$ for all $k$. Given a set $\A\subseteq\bb^\infty(E)$, we say that $\alpha$ is a strategy for \emph{playing into $\A$} if whenever II follows $\alpha$ (that is, at each turn, given as input I's prior moves, II plays the output of $\alpha$), the resulting outcome lies in $\A$. These notions are defined likewise for I.

\begin{defn}
	The \emph{infinite asymptotic game} \cite{MR2566964} \cite{MR2604856} played below $X$, denoted $F[X]$, is as follows: Players I and II alternate, with player I going first and playing $n_k\in\omega$, and player II responding with vectors $y_k\in \langle X/n_k\rangle$ subject to the constraint $y_k<y_{k+1}$, for $k\in\omega$. Again, $(y_k)$ is the \emph{outcome} of a play of the game.	
\end{defn}

\emph{Strategies} for I and II in $F[X]$ are defined as above for $G[X]$, as is the notion of having a strategy for \emph{playing into} a set. In both games, we will be agnostic about which player ``wins''.

\begin{defn}
	A family $\LH\subseteq\bb^\infty(E)$ is \emph{strategic} if whenever $\alpha$ is strategy for II in $G[X]$, when $X\in\LH$, there is an outcome of $\alpha$ which is in $\LH$.	
\end{defn}

It is proved in \cite{MR3864398} that any sufficiently generic filter for $(\bb^\infty(E),\preceq^*)$, viewed as a $\sigma$-closed notion of forcing, is a strategic $(p^+)$-family.

The following theorem from \cite{MR3864398} was originally proved by Rosendal \cite{MR2604856} in the case $\LH=\bb^\infty(E)$, which in turn was a discretized version of the dichotomy for block sequences in Banach spaces proved by Gowers in \cite{MR1954235}.

\begin{thm}[Theorem 1.1 in \cite{MR3864398}]\label{thm:local_Rosendal}
	Let $\LH\subseteq\bb^\infty(E)$ be a $(p^+)$-family. If $\A\subseteq\bb^\infty(E)$ is analytic, then there is a $X\in\LH$ such that either
	\begin{enumerate}[label=\rm{(\roman*)}]
		\item I has a strategy in $F[X]$ for playing into $\A^c$, or
		\item II has a strategy in $G[X]$ for playing into $\A$.
	\end{enumerate}
\end{thm}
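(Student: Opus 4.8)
The plan is to follow the standard two-stage strategy for Gowers-type dichotomies: first establish the dichotomy for \emph{closed} $\A$ by a combinatorial forcing argument tailored to the family $\LH$, and then bootstrap to analytic $\A$ by unfolding. Throughout, the key asymmetry to respect is that in $F[X]$ player I is \emph{weak} (naming only a cutoff $n_k$), whereas in $G[X]$ player II is handled against a \emph{strong} player I (naming a whole block subspace), which is exactly what makes the two alternatives complementary.

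For the closed case, fix $X_0\in\LH$ and suppose alternative (ii) fails below $X_0$, i.e.\ for no $X\in\LH\restr X_0$ does II have a strategy in $G[X]$ for playing into $\A$; the goal is to produce $X\in\LH\restr X_0$ witnessing (i). The engine is a notion of combinatorial forcing on pairs $(s,X)$, where $s$ is a finite block sequence and $X\in\LH$. Say $(s,X)$ \emph{accepts} if II has a strategy in the Gowers game below $X$, with $s$ already played, to reach an outcome $y$ with $s\concat y\in\A$, and say $(s,X)$ \emph{rejects} if no $Y\in\LH\restr X$ has $(s,Y)$ accepting. The first step is the deciding lemma: every $X\in\LH$ admits some $Y\in\LH\restr X$ such that $(s,Y)$ either accepts or rejects for \emph{every} finite $s$ at once. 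Since there are only countably many $s$, this is a diagonalization made possible precisely by the $(p)$-property of $\LH$: decide one $s$ at a time along a $\preceq^*$-decreasing sequence and pass to a pseudo-intersection in $\LH$.

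Having decided everything below some $Y$, if $(\emptyset,Y)$ accepts we are in case (ii), contrary to assumption, so $(\emptyset,Y)$ rejects. Using fullness I would then pass to $Z\in\LH\restr Y$ below which rejection is \emph{hereditary}: whenever $(s,Z)$ rejects, I can name an $n$ so large that for \emph{every} $y\in\langle Z/n\rangle$ with $s<y$ the pair $(s\concat y,Z)$ again rejects. Granting this, I's strategy in the asymptotic game $F[Z]$ is simply to answer each of II's vectors with the corresponding large $n$, keeping the played finite sequence rejecting; any infinite outcome then has all of its initial segments rejecting, and — with the definitions arranged so that points of $\A$ trigger acceptance at a finite stage — closedness of $\A$ forces such an outcome into $\A^c$, yielding (i). Finally, to handle analytic $\A$, write it as the projection of a closed $\tilde\A\subseteq\bb^\infty(E)\times\omega^\omega$, run the same forcing on the \emph{unfolded} games in which II (in $G$) and I (in $F$) additionally build a candidate witness $\beta\in\omega^\omega$, and apply the closed-case dichotomy to $\tilde\A$; projecting along the first coordinate turns a II-strategy into $\tilde\A$ into a II-strategy into $\A$, and a I-strategy into $\tilde\A^c$ into a I-strategy into $\A^c$.

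I expect the main obstacle to be the hereditary-rejection step, where the two games genuinely interact. One must convert the \emph{absence} of a single good asymptotic move — a statement quantifying over $\langle Z/n\rangle$ for all $n$ — into an \emph{accepting Gowers strategy}: if for every $n$ there were an accepting one-step extension $s\concat y$ with $y\in\langle Z/n\rangle$, then fullness (a selectivity/amalgamation property letting one assemble locally good choices into a single $Z'\in\LH\restr Z$) together with the strategic property (ensuring the outcomes of the spliced II-strategy stay in $\LH$) would produce a strategy for II in $G[Z']$ reaching $\A$, contradicting that $(s,Z)$ rejects. Isolating exactly how fullness and strategicness combine here, and verifying that the amalgamated object is again a legitimate condition in $\LH$, is the delicate heart of the argument.
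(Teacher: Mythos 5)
First, a point of reference: this paper does not actually prove Theorem \ref{thm:local_Rosendal} --- it is imported verbatim as Theorem 1.1 of the companion paper \cite{Smythe:LocRamBlock} --- so there is no in-paper argument to measure your sketch against. That said, your architecture does match the known proof that the cited paper localizes (Rosendal's combinatorial-forcing proof of the discretized Gowers dichotomy): accept/reject on finite positions, a decision lemma driven by the $(p)$-property, fullness to propagate rejection into a strategy for I in $F[X]$, and an unfolding to pass from closed to analytic sets. So the plan is the right one.

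There are, however, three soft spots worth naming. (1) You invoke ``the strategic property'' to keep spliced outcomes inside $\LH$, but strategicness is not a hypothesis here: a $(p^+)$-family is only full with the $(p)$-property (strategicness enters only for the $\L(\R)$ strengthening, Theorem 1.3 of \cite{Smythe:LocRamBlock}). Fortunately it is not needed: the contradiction with ``$(s,Y)$ rejects'' is reached by exhibiting \emph{some} $W\in\LH\restr Y$ that accepts $s$, i.e.\ a II-strategy in $G[W]$ into the relevant set, and nothing requires the outcome of that strategy to lie in $\LH$. If your argument genuinely leaned on strategicness it would prove a weaker theorem than the one stated. (2) In your final paragraph the quantifiers in the fullness step are off: from ``for every $n$ there is an accepting $y\in\langle Z/n\rangle$'' you cannot assemble a II-strategy in the Gowers game, since there player I plays arbitrary block subspaces $W\preceq Z$, not tails of $Z$. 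The correct route is the contrapositive of fullness applied to $D=\{y:(s\concat y,Z)\text{ rejects}\}$: if no $Z'\in\LH\restr Z$ has $\langle Z'\rangle\subseteq D$, fullness produces a single $W\in\LH\restr Z$ \emph{every} further block subspace of which contains an accepting vector, and that is exactly what a II-strategy in $G[W]$ needs; the tails $Z/n$ only appear afterwards, when diagonalizing over the countably many positions $s$ via the $(p)$-property. (3) The direction of the unfolding you pass over in one clause --- turning a I-strategy avoiding the closed set $\tilde\A$ into a I-strategy avoiding $\A=\pi[\tilde\A]$, which requires defeating all witnesses $\beta$ simultaneously even though each play of the unfolded game builds only one --- is the genuinely delicate part of the analytic case; the combinatorial forcing has to be run on pairs $(s,t)$ with rejection quantified over all witness digits, and ``projecting along the first coordinate'' does not by itself address this.
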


Assuming large cardinal hypotheses, and that $\LH$ is strategic, Theorem \ref{thm:local_Rosendal} can be extended to all sets $\A$ in $\L(\R)$ (Theorem 1.3 in \cite{MR3864398}).

In what follows, if $\LA$ is an infinite almost disjoint family of subspaces of $E$ (notably, the elements of $\LA$ need not be block subspaces), we let
\[
	\LH(\LA)=\{X\in\bb^\infty(E):\exists^\infty Y\in\LA(\dim(\langle X\rangle\cap Y)=\infty)\}.
\]
Note that $\LH(\LA)$ is always nonempty, as it contains $(e_n)$, is closed upwards with respect to $\preceq^*$, and is thus a family. We let
\[
	\bar{\LA}=\{X\in\bb^\infty(E):\exists Y\in\LA(\langle X\rangle\subseteq Y)\}. 
\]
Note that $\bar{\LA}\cap\LH(\LA)=\emptyset$, and that if $\LA$ is analytic, so is $\bar{\LA}$.

\begin{lemma}\label{lem:strats_H(A)}
	If $\LA$ is an infinite almost disjoint family of subspaces of $E$, then for any $X\in\LH(\LA)$,
	\begin{enumerate}
		\item I and II have strategies in $G[X]$ and $F[X]$, respectively, for playing into $\LH(\LA)$.
		\item If $\LA$ is maximal, then I and II have strategies in $G[X]$ and $F[X]$, respectively, for playing into $\bar{\LA}$.
	\end{enumerate}
\end{lemma}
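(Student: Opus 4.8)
The plan is to read everything off the defining property of $\LH(\LA)$: if $X\in\LH(\LA)$ then there are infinitely many $Y\in\LA$ with $\dim(\langle X\rangle\cap Y)=\infty$, and each such intersection is an infinite-dimensional subspace, hence contains a block subspace and so contains vectors of arbitrarily large support. The two games differ only in who selects the outcome vectors: in $G[X]$ player I offers a subspace and II must pick inside it, whereas in $F[X]$ player II picks a vector outright subject only to a lower support bound $n_k$. In both cases the player named in the statement can therefore \emph{steer} every resulting outcome into whichever members of $\LA$ are desired.

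For part (a), I would first fix infinitely many distinct $Y_0,Y_1,\dots\in\LA$ with $\dim(\langle X\rangle\cap Y_i)=\infty$, together with a surjection $i\colon\omega\to\omega$ taking each value infinitely often. In $G[X]$, at stage $k$ player I plays a block sequence $X_k\preceq X$ with $\langle X_k\rangle\subseteq\langle X\rangle\cap Y_{i(k)}$ and all vectors above $y_{k-1}$ (possible since $\langle X\rangle\cap Y_{i(k)}$ contains a block subspace); then every legal response satisfies $y_k\in Y_{i(k)}$ and $y_k>y_{k-1}$ automatically. In $F[X]$, player II answers I's move $n_k$ by choosing $y_k\in\langle X\rangle\cap Y_{i(k)}$ with $\min(\supp(y_k))>\max(n_k,\max(\supp(y_{k-1})))$; as $X$ is a block sequence, any such high-support vector of $\langle X\rangle$ automatically lies in $\langle X/n_k\rangle$, so the move is legal. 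In either game the outcome $(y_k)$ contains, for every fixed $i$, infinitely many block vectors lying in $Y_i$; being in block position these are linearly independent, so $\dim(\langle(y_k)\rangle\cap Y_i)=\infty$ for every $i$ and hence $(y_k)\in\LH(\LA)$.

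Part (b) is the same construction aimed at a single target. Fixing one $Y\in\LA$ with $\dim(\langle X\rangle\cap Y)=\infty$ (such a $Y$ exists because $X\in\LH(\LA)$), player I in $G[X]$ always plays block sequences with span inside $\langle X\rangle\cap Y$ and above the previous vector, while player II in $F[X]$ always chooses $y_k\in\langle X\rangle\cap Y$ of sufficiently large support. Every resulting outcome then satisfies $\langle(y_k)\rangle\subseteq Y\in\LA$, so $(y_k)\in\bar{\LA}$. It is worth noting that maximality of $\LA$ is not actually needed for these strategies to exist: a single $Y$ with infinite-dimensional intersection suffices, and $X\in\LH(\LA)$ already supplies one.

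The two verifications that need a moment's care, and the closest thing to an obstacle, are the legality checks: that player I's offered block sequences are genuinely $\preceq X$ and lie above the current outcome vector, and that a vector of $\langle X\rangle$ with support above $n_k$ belongs to $\langle X/n_k\rangle$. Both follow immediately from the block structure of $X$ (disjoint, increasing supports), the second because the lowest block vector of $X$ appearing in such a vector already has support above $n_k$. The remaining subtlety in part (a) is purely bookkeeping: the surjection $i$ guarantees that each $Y_i$ receives infinitely many outcome vectors while $(y_k)$ stays a single block sequence, and linear independence of block vectors then upgrades ``infinitely many outcome vectors in $Y_i$'' to ``$\dim(\langle(y_k)\rangle\cap Y_i)=\infty$''.
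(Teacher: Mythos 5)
Your proof is correct and follows essentially the same strategy as the paper's: in both games the relevant player steers the outcome into a rotating list of $Y_i$'s (for part (a)) or a single $Y$ (for part (b)), using that each $\langle X\rangle\cap Y_i$ is infinite-dimensional and hence contains vectors of arbitrarily large support, with your legality checks (block sequences above the current vector, and high-support vectors of $\langle X\rangle$ lying in $\langle X/n_k\rangle$) being exactly the points the paper leaves implicit. Your side remark that maximality is not actually used in part (b) is also accurate, since $X\in\LH(\LA)$ already furnishes the required $Y$.
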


\begin{proof}
	(a) Fix an enumeration $(Y_n)$ of a countably infinite subset of $\LA$, each $Y_n$ having infinite-dimensional intersection with $\langle X\rangle$, in such a way that each $Y_n$ is repeated infinitely often. To see that I has a strategy in $G[X]$ for playing into $\LH(\LA)$, let I play an infinite block sequence in $\langle X\rangle\cap Y_n$ on their $n$th move. The resulting outcome will have infinitely many entries in each $Y_n$ and is thus in $\LH(\LA)$. To see that II has a strategy in $F[X]$ for playing into $\LH(\LA)$, let II play the first element of $Y_n$ they can on their $n$th move.

	\noindent(b) Suppose that $\LA$ is maximal. Take $Y\in\LA$ having infinite-dimensional intersection with $\langle X\rangle$. To see that I has a strategy in $G[X]$ for playing into $\bar{\LA}$, let I play, repeatedly, any infinite block sequence $Z$ contained in $\langle X\rangle\cap Y$. The resulting outcome will be below $Y$. To see that II has a strategy in $F[X]$ for playing into $\bar{\LA}$, observe that so long as II plays in $Y$, which they may always do, the outcome will be contained in $Y$. 	
\end{proof}

\begin{lemma}\label{lem:cont_mod_finite}
	For $X$ an infinite-dimensional subspace, $Y$ a block subspace, and $z_0<\cdots<z_\ell$ in $E$, if $X\subseteq Y+\langle z_0,\ldots,z_\ell\rangle$, then there is an $M$ such that $X/M\subseteq Y$.	
\end{lemma}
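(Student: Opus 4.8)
The plan is to argue directly with a block basis of $Y$, exploiting that its spanning vectors sit in block position. Fix a block sequence $(y_n)$ with $Y=\langle(y_n)\rangle$, and set $K=\max(\supp(z_\ell))$, so that every vector in $\langle z_0,\ldots,z_\ell\rangle$ is supported in $[0,K]$. Since $(\min(\supp(y_n)))_n$ is strictly increasing, there is a least $N$ with $\min(\supp(y_N))>K$, and I would take
\[
	M=\max\left(\{K\}\cup\bigcup_{n<N}\supp(y_n)\right).
\]
The whole point of this choice is that $M$ dominates not merely $K$ but the \emph{entire} support of every block vector $y_n$ that reaches down into $[0,K]$. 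Because $(y_n)$ is a block sequence, $\max(\bigcup_{n<N}\supp(y_n))=\max(\supp(y_{N-1}))<\min(\supp(y_N))$, so in fact $\min(\supp(y_n))>M$ for every $n\geq N$.

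Next I would verify $X/M\subseteq Y$ by a coordinatewise comparison. Take any $x\in X$ with $\min(\supp(x))>M$, and use the hypothesis $X\subseteq Y+\langle z_0,\ldots,z_\ell\rangle$ to write $x=y+z$ with $y\in Y$ and $z\in\langle z_0,\ldots,z_\ell\rangle$. Expand $y=\sum_n c_n y_n$ and split it as $y=y_{\mathrm{lo}}+y_{\mathrm{hi}}$, where $y_{\mathrm{lo}}=\sum_{n<N}c_n y_n$ and $y_{\mathrm{hi}}=\sum_{n\geq N}c_n y_n$. By the preceding paragraph, $y_{\mathrm{lo}}$ is supported in $[0,M]$ and $z$ in $[0,K]\subseteq[0,M]$, while $y_{\mathrm{hi}}$ is supported in $(M,\infty)$. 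Rewriting $x=(y_{\mathrm{lo}}+z)+y_{\mathrm{hi}}$ and restricting to the coordinates in $[0,M]$, the left side vanishes (since $\min(\supp(x))>M$) and $y_{\mathrm{hi}}$ contributes nothing, forcing $y_{\mathrm{lo}}+z=0$. Hence $x=y_{\mathrm{hi}}=\sum_{n\geq N}c_n y_n\in Y$, as required. Note this uses only that $Y$ is a block subspace; $X$ plays no role beyond supplying the vector $x$.

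I expect the one genuine subtlety—and the only place where the argument can go wrong—to be the choice of $M$. A naive attempt to take $M=K$ fails: a single block vector $y_n$ may begin at or below $K$ yet extend arbitrarily high, so that a high-support vector like $y_n-z$ can lie in $Y+\langle z_0,\ldots,z_\ell\rangle$ without lying in $Y$. Taking $M$ to absorb the full ``overhang'' $\bigcup_{n<N}\supp(y_n)$ is precisely what rules this out. (One could alternatively derive the result from repeated applications of Lemma~\ref{lem:extend}, but the direct block-basis computation above is cleaner and self-contained.)
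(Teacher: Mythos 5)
Your proof is correct and is essentially identical to the paper's: both choose $M$ to dominate the full supports of the finitely many block basis vectors of $Y$ reaching into $[0,\max(\supp(z_\ell))]$, split $y$ into its low and high parts along that cut, and conclude by comparing supports that the low part cancels $z$. The only differences are notational (your $K$ and $y_{\mathrm{lo}},y_{\mathrm{hi}}$ versus the paper's $N$ and $y',y''$).
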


\begin{proof}
	Let $(y_n)$ be a block basis for $Y$. Let $N = \max\{\supp(z_i):i\leq\ell\}$ and suppose that $y_0,\ldots,y_k$ are those basis vectors in $Y$ whose supports are not above $N$. Let $M=\max\{N,\max(\supp(y_k))\}$. We claim that $X/M\subseteq Y$. Take $x\in X/M$. By assumption, $x=y+w$ where $y\in Y$ and $w\in\langle z_0,\ldots,z_\ell\rangle$. Write $y=y'+y''$, where $y'\in\langle y_0,\ldots,y_k\rangle$ and $y''\in\langle y_{k+1},y_{k+2},\ldots\rangle$, so that $x-y''=y'+w$. If either side of this equation is nonzero, then $\supp(x-y'')>M$, but $\supp(y'+w)\leq M$, a contradiction. Thus, $x=y''\in Y$.
\end{proof}

\begin{lemma}\label{lem:mad_strat_p}
	If $\LA$ is an infinite mad family of subspaces, then $\LH(A)$ is strategic and has the $(p)$-property.
\end{lemma}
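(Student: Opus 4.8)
The strategic part is immediate from Lemma~\ref{lem:strats_H(A)}(a). Given a strategy $\alpha$ for II in $G[X]$ with $X\in\LH(\LA)$, let I follow the strategy furnished there for playing into $\LH(\LA)$; the unique play in which I uses that strategy and II uses $\alpha$ has outcome in $\LH(\LA)$, and this outcome is in particular an outcome of $\alpha$. So I only need to establish the $(p)$-property.

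For the $(p)$-property, fix a decreasing sequence $X_0\succeq X_1\succeq\cdots$ with each $X_n\in\LH(\LA)$ and set
\[
	\LA_n=\{Z\in\LA:\dim(\langle X_n\rangle\cap Z)=\infty\}.
\]
Since $\langle X_{n+1}\rangle\subseteq\langle X_n\rangle$, the $\LA_n$ are decreasing, and each is infinite as $X_n\in\LH(\LA)$. The plan is to reduce everything to the single assertion that $\bigcap_n\LA_n$ is infinite, which is the only place maximality will be used. Indeed, note first that any $Y$ with $Y\preceq^* X_n$ for all $n$ can only have infinite-dimensional intersection with members of $\bigcap_n\LA_n$: if $\dim(\langle Y\rangle\cap Z)=\infty$ then, using $\langle Y/j_n\rangle\subseteq\langle X_n\rangle$, one gets $Z\in\LA_n$ for every $n$. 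Thus $\bigcap_n\LA_n$ being infinite is both necessary and, as I now argue, sufficient.

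Granting that $\bigcap_n\LA_n$ is infinite, the construction of $Y$ is a routine dovetailing fusion requiring no maximality. Enumerate distinct $Z_0,Z_1,\dots\in\bigcap_n\LA_n$, so $\dim(\langle X_n\rangle\cap Z_m)=\infty$ for all $m,n$. Dovetail the tasks ``add one more vector to the $Z_m$-pile'' over all $m$, each recurring infinitely often; at the $t$-th task, concerning some $Z_m$, choose $y_t\in\langle X_t\rangle\cap Z_m$ with support strictly above all previously chosen vectors, which is possible because $\langle X_t\rangle\cap Z_m$ is infinite-dimensional. Then $Y=(y_t)$ is a block sequence; since $y_t\in\langle X_t\rangle\subseteq\langle X_n\rangle$ for $t\geq n$ we get $Y\preceq^* X_n$ for every $n$, and since infinitely many $y_t$ lie in each (distinct) $Z_m$ we get $\dim(\langle Y\rangle\cap Z_m)=\infty$, so $Y\in\LH(\LA)$.

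The main obstacle, and the step I expect to be delicate, is proving that $\bigcap_n\LA_n$ is infinite. I would argue by contradiction: suppose $\bigcap_n\LA_n=\{Z^{(1)},\dots,Z^{(r)}\}$ is finite, and, after shrinking each $Z^{(i)}$ to a finite-codimensional pairwise disjoint subspace (being a.d.\ from which still forces a.d.\ from $Z^{(i)}$), build a block sequence $Y=(y_k)$ with $y_k\in\langle X_k\rangle$ and $\langle y_0,\dots,y_k\rangle\cap Z^{(i)}=\{0\}$ for all $i\leq r$. At stage $k$, Lemma~\ref{lem:extend} gives an $M$ such that any $y>M$ outside $\bigcup_i Z^{(i)}$ leaves the intersections unchanged, and such a $y$ can be found \emph{inside} $\langle X_k\rangle$ exactly as in Lemma~\ref{lem:disj_subs2}: take $y=x_1+\cdots+x_r$ with $x_i\in\langle X_k\rangle\cap Z^{(i)}$ chosen in block position above $M$ via Lemma~\ref{lem:disj_subs1}, using that each $\langle X_k\rangle\cap Z^{(i)}$ is infinite-dimensional since $Z^{(i)}\in\LA_k$. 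The resulting $\langle Y\rangle$ is a.d.\ from each $Z^{(i)}$ by construction and from every other $Z\in\LA$ because such $Z$ misses some $\LA_n$, whence $\dim(\langle X_n\rangle\cap Z)<\infty$ and $Y\preceq^* X_n$ force $\dim(\langle Y\rangle\cap Z)<\infty$. Then $\langle Y\rangle$ is a.d.\ from all of $\LA$ and lies outside it, contradicting maximality. The subtlety here is extracting vectors of $\langle X_k\rangle$ avoiding the $Z^{(i)}$ over a possibly finite field, which is why I route through the sum construction of Lemma~\ref{lem:disj_subs2} rather than a naive ``a space is not a finite union of proper subspaces'' argument.
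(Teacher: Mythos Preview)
Your argument is correct in its essentials and takes a genuinely different route from the paper. The paper follows Mathias' coideal argument rather directly: it iteratively manufactures diagonalizations $X^0, X^1,\ldots$ of $(X_n)$ together with distinct $Y^0,Y^1,\ldots\in\LA$ (each $Y^m$ meeting $\langle X^m\rangle$ in infinite dimension), via an elaborate dovetailing-and-avoidance construction, and only at the end assembles the desired $(p)$-witness. You instead isolate the single combinatorial fact that $\bigcap_n\LA_n$ must be infinite, prove it by contradiction, and then do a straightforward fusion. This is shorter and more conceptual; note that the paper's $Y^m$ are in fact elements of $\bigcap_n\LA_n$, so the paper is implicitly proving the same fact, just constructively and without naming it.

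There is one small genuine gap in your contradiction step. The trick $y=x_1+\cdots+x_r$ with $x_i\in\langle X_k\rangle\cap Z^{(i)}$ only lands outside every $Z^{(i)}$ when $r\geq 2$: for $r=1$ you get $y=x_1\in Z^{(1)}$, and for $r=0$ the sum is $0$. Both cases are easy to patch. If $r=0$, any block sequence $(y_k)$ with $y_k\in\langle X_k\rangle$ is already almost disjoint from every $Z\in\LA$. If $r=1$, use that $\LA_k$ is infinite: pick $W\in\LA_k\setminus\{Z^{(1)}\}$, and since $(\langle X_k\rangle\cap W)/M$ is infinite-dimensional while $W\cap Z^{(1)}$ is finite-dimensional, you can choose $y_k$ there avoiding $Z^{(1)}$. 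You should also say one word about why $\langle Y\rangle\notin\LA$: since $Y\preceq^* X_n$ for all $n$, membership in $\LA$ would force $\langle Y\rangle\in\bigcap_n\LA_n=\{Z^{(1)},\ldots,Z^{(r)}\}$, contradicting that $\langle Y\rangle$ is almost disjoint from each $Z^{(i)}$.
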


\begin{proof}
	That $\LH(\LA)$ is strategic is immediate from Lemma \ref{lem:strats_H(A)}(a), as whenever $\alpha$ is a strategy for II in $G[X]$, for $X\in\LH(\LA)$, we may let I use their strategy for playing into $\LH(\LA)$ in response.
	
	In what follows, if $(Z_n)$ is a sequence in $\bb^\infty(E)$ and $Z\in\bb^\infty(E)$ is such that $Z/n\preceq Z_n$ for all $n\in\omega$, we will call $Z$ a \emph{diagonalization} of $(Z_n)$. 
	
	To see that $\LH(\LA)$ has the $(p)$-property, let $X_0\succeq X_1\succeq X_2\succeq\cdots$ be a decreasing sequence contained within $\LH(\LA)$. Let $X^0\in\bb^\infty(E)$ be a diagonalization of $(X_n)$ and take $Y^0\in\LA$ having infinite-dimensional intersection with $\langle X^0\rangle$.  Following the proof of Proposition 0.7 in \cite{MR0491197}, we will construct sequences $(X^m)$ and $(Y^m)$ in $\bb^\infty(E)$ where each $Y^m$ is a distinct element of $\LA$, $\langle X^m\rangle$ has infinite-dimensional intersection with $Y^m$, and $X^m$ a further diagonalization of $(X_n)$.
	
	
	For each $n$, construct a countably infinite pairwise disjoint family of block sequences $\LA_n$ below $X_n$ such that
	\begin{enumerate}[label=\rm{(\roman*)}]
		\item for all $Y\in\LA_n$, there is a $Y'\in\LA$ with $\langle Y\rangle\subseteq Y'$, and
		\item for all $Y\in\LA_n$, $\langle Y\rangle$ is disjoint from $Y^0$.
	\end{enumerate}
	This can be accomplished as $X_n\in\LH(\LA)$; simply take a countably infinite $\LA'_n\subseteq\LA$ not containing $Y^0$, all of whose elements have infinite-dimensional intersection with $\langle X_n\rangle$, and let $\LA_n$ be a set of block bases of subspaces witnessing this. Pairwise disjointness and disjointness from $Y^0$, for elements in $\LA_n$, can be ensured by passing to tail block sequences. Enumerate $\LA_n=\{Y^n_i:i\in\omega\}$ in such a way that each element is repeated infinitely often.
	
	Next, we build a decreasing sequence $X^0_0\succeq X^0_1\succeq X^0_2\succeq\cdots$ in $\LH(\LA)$ such that for each $n$, $X^0_n\preceq X_n$, and $\langle X^0_n\rangle$ is almost disjoint from $Y^0$. We will denote by $X^0_n=(x^0_{n,i})_{i\in\omega}$.

	Let $x^0_{0,0}$ be the first entry of $Y^0_0$. There must be a nonzero $x\in\langle Y^1_0\rangle$ above $x^0_{0,0}$ such that no linear combination of $x$ and $x^0_{0,0}$ is in $Y^0$, otherwise $Y^1_0\preceq^* Y^0$ by Lemma \ref{lem:cont_mod_finite}. Let $x^0_{1,0}=x^0_{0,1}\in Y^1_0$ be such a vector. We continue in this fashion, following the diagram in Figure \ref{fig:diag_construct}, with $\color{red}X^0_0=(x^0_{0,n})$, $\color{blue}X^0_1=(x^0_{1,n})$, $\color{olive}X^0_2=(x^0_{2,n})$, $X^0_3=(x^0_{3,n})$, etc.
	
	\begin{figure}
		\centering\scalebox{0.80}{\xymatrix@R=1.2em@C=1.5em{ 
	        &		&       &       &       &       & X_0 \ar@{-}[dl] \ar@{-}[dll] \ar@{-}[dlll] \ar@{-}[dllll] \ar@{-}[dd]\\
	 \LA_0: &\cdots & \underset{\color{red}x^0_{0,9}}{Y^0_3} & \underset{\color{red}x^0_{0,5}}{Y^0_2} & \underset{\color{red}x^0_{0,2}}{Y^0_1} & \underset{\color{red}x^0_{0,0}}{Y^0_0} &    \\
	        &		&       &       &       &       & X_1\ar@{-}[dl] \ar@{-}[dll] \ar@{-}[dlll] \ar@{-}[dllll] \ar@{-}[dd]\\
	 \LA_1: &\cdots & Y^1_3 & \underset{\color{blue}x^0_{1,5}=\color{red}x^0_{0,8}}{Y^1_2} & \underset{\color{blue}x^0_{1,2}=\color{red}x^0_{0,4}}{Y^1_1} & \underset{\color{blue}x^0_{1,0}=\color{red}x^0_{0,1}}{Y^1_0} &    \\
	        &		&       &       &       &       & X_2\ar@{-}[dl] \ar@{-}[dll] \ar@{-}[dlll] \ar@{-}[dllll] \ar@{-}[dd]\\
	 \LA_2: &\cdots & Y^2_3 & Y^2_2 & \underset{\color{olive}x^0_{2,2}=\color{blue}x^0_{1,4}=\color{red}x^0_{0,7}}{Y^2_1} & \underset{\color{olive}x^0_{2,0}=\color{blue}x^0_{1,1}=\color{red}x^0_{0,3}}{Y^2_0} &    \\
	        &		&       &       &       &       & X_3\ar@{-}[dl] \ar@{-}[dll] \ar@{-}[dlll] \ar@{-}[dllll] \ar@{-}[d]\\
	 \LA_3: &\cdots & Y^3_3 & Y^3_2 & Y^3_1 & \underset{x^0_{3,0}=\color{olive}x^0_{2,1}=\color{blue}x^0_{1,3}=\color{red}x^0_{0,6}}{Y^3_0} & \vdots  	 
}}	
	\caption{}
	\label{fig:diag_construct}
\end{figure}
	
	That is, let $x^0_{0,2}\in Y^0_1 $ be a vector above $x^0_{0,1}$ such that no linear combination of it with $x^0_{0,0}$ and $x^0_{0,1}$ lies in $Y^0$. Next, let $x^0_{0,3}=x^0_{1,1}=x^0_{2,0}\in\langle Y^2_0\rangle$ be a vector above $x^0_{0,2}$ such that no linear combination of it with $x^0_{0,0}$, $x^0_{0,1}$ and $x^0_{0,2}$ lies in $Y^0$. And so on.
	
	By construction, $X^0_0\succeq X^0_1\succeq X^0_2\succeq\cdots$ as each $X^0_n$ is a subsequence of the previous ones, and each $\langle X^0_n\rangle$ is disjoint from $Y^0$. Moreover, each $\langle X^0_n\rangle$ has infinite-dimensional intersection with $\langle Y\rangle$, for each $Y\in\LA_n$, and $X^0_n\in\LH(\LA)$. Let $X^1$ be a diagonalization of $(X^0_n)$, and thus also a diagonalization of the original $(X_n)$ as well. Let $Y^1\in\LA$ have infinite-dimensional intersection with $\langle X^1\rangle$. Note that we must have $Y^1\neq Y^0$.
	
	We continue this process to obtain $(X^m)$ and $(Y^m)$  as desired. Let $i:\omega\to\omega$ be an everywhere infinity-to-one surjection and consider the  sequence of pairs $(X^{i(m)},Y^{i(m)})$. Construct $X=(x_m)$ so that each $x_m\in\langle X^{i(m)}/m\rangle\cap Y^{i(m)}$. Then, $X\in\LH(\LA)$, and moreover, for all $n$, if $x\in\langle X/n\rangle$, then $x$ is a linear combination of elements of $X^{i(m_0)}/n,\ldots,X^{i(m_k)}/n$, each of which is $\preceq X_n$. So, $X/n\preceq X_n$ for all $n$.
\end{proof}

\begin{defn}
	An infinite mad family $\LA$ of subspaces is \emph{full} if $\LH(\LA)$ is full.	
\end{defn}

The preceding lemmas, and Theorem \ref{thm:local_Rosendal}, yield the following:

\begin{thm}\label{thm:no_full_mad}
	There are no analytic full mad families of subspaces.	
\end{thm}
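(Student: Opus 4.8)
The plan is to derive a contradiction from the assumption that $\LA$ is an analytic full mad family by applying the Gowers-type dichotomy of Theorem~\ref{thm:local_Rosendal} to the analytic set $\bar{\LA}$, using $\LH=\LH(\LA)$ as the ambient local family. First I would observe that the hypotheses of Theorem~\ref{thm:local_Rosendal} are in place: by Lemma~\ref{lem:mad_strat_p}, $\LH(\LA)$ is strategic and has the $(p)$-property, and by the definition of \emph{full} mad family, $\LH(\LA)$ is full; hence $\LH(\LA)$ is a $(p^+)$-family. Since $\LA$ is analytic, $\bar{\LA}$ is analytic (as noted just before Lemma~\ref{lem:strats_H(A)}), so the dichotomy applies to $\A=\bar{\LA}$ and produces some $X\in\LH(\LA)$ satisfying alternative (i) or alternative (ii).

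The key step is to rule out both alternatives. For alternative (ii): if II had a strategy in $G[X]$ for playing into $\bar{\LA}$, I would invoke Lemma~\ref{lem:strats_H(A)}(a), by which I has a strategy in $G[X]$ for playing into $\LH(\LA)$. Playing these two strategies against each other yields an outcome lying in both $\bar{\LA}$ and $\LH(\LA)$, contradicting the earlier observation that $\bar{\LA}\cap\LH(\LA)=\emptyset$. For alternative (i): if I had a strategy in $F[X]$ for playing into $\bar{\LA}^c=\A^c$, I would use the maximality of $\LA$ via Lemma~\ref{lem:strats_H(A)}(b), which (since $\LA$ is maximal) gives II a strategy in $F[X]$ for playing into $\bar{\LA}$. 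Running I's strategy against II's strategy in the asymptotic game $F[X]$ produces an outcome simultaneously in $\bar{\LA}$ and in $\bar{\LA}^c$, again a contradiction. Having excluded both cases, no such $X$ can exist, contradicting the dichotomy, so no analytic full mad family of subspaces exists.

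The part requiring the most care is making sure the two strategies in each alternative can actually be \emph{composed} to yield a single legitimate outcome, i.e.\ that the game-theoretic pairing is coherent: in $G[X]$, I's moves are block sequences $X_k\preceq X$ and II responds with $y_k\in\langle X_k\rangle$, so when I plays the strategy from Lemma~\ref{lem:strats_H(A)}(a) and II plays the hypothesized strategy into $\bar{\LA}$, the single resulting outcome $(y_k)$ is determined and lies in both target sets; the asymptotic game $F[X]$ in alternative (i) works dually, with I producing the integers $n_k$ and II the vectors. The one subtlety I would flag is the asymmetry between which player's strategy comes from the dichotomy and which from Lemma~\ref{lem:strats_H(A)}: in (ii) the dichotomy gives II and the lemma gives I (both in $G[X]$), whereas in (i) the dichotomy gives I and the lemma gives II (both in $F[X]$), so the outcomes of both a Gowers play and an asymptotic play must be meaningful elements of $\bb^\infty(E)$ to which membership in $\bar{\LA}$ and $\LH(\LA)$ applies; this is guaranteed because both games produce genuine infinite block sequences as outcomes. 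Once this bookkeeping is in place, the disjointness $\bar{\LA}\cap\LH(\LA)=\emptyset$ and the tautological disjointness of $\bar{\LA}$ and $\bar{\LA}^c$ close out the argument.
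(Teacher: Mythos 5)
Your proposal is correct and follows essentially the same route as the paper: apply Theorem~\ref{thm:local_Rosendal} to $\bar{\LA}$ with $\LH(\LA)$ as the $(p^+)$-family (via Lemma~\ref{lem:mad_strat_p} plus fullness), then kill alternative (ii) by pitting II's strategy against I's strategy from Lemma~\ref{lem:strats_H(A)}(a) using $\bar{\LA}\cap\LH(\LA)=\emptyset$, and alternative (i) by pitting I's strategy against II's strategy from Lemma~\ref{lem:strats_H(A)}(b). The paper states these two contradictions without elaboration; your write-up simply makes the strategy-composition explicit.
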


\begin{proof}
	Suppose that $\LA$ was an analytic full mad family of subspaces. By Lemma \ref{lem:mad_strat_p}, $\LH$ is a $(p^+)$-family. Applying Theorem \ref{thm:local_Rosendal} to the analytic set $\bar{\LA}$, there is an $X\in\LH(\LA)$ such that either I has a strategy in $F[X]$ for playing into $\bar{\LA}^c$, or II has a strategy in $G[Y]$ for playing into $\bar{\LA}$. However, the latter contradicts Lemma \ref{lem:strats_H(A)}(a), while the former contradicts Lemma \ref{lem:strats_H(A)}(b). 
\end{proof}

Under large cardinal hypotheses, an identical proof, using Theorem 1.3 in \cite{MR3864398}, shows that no full mad family of subspaces can be in $\L(\R)$.

Must a mad family of subspaces be full? Unfortunately, we are only able to show that, consistently, there are such mad families. It remains an open question whether mad families must be full (we suspect not), and if not, whether full mad families exist in $\ZFC$. First, we need a variation on Theorem \ref{thm:MA_a_large}, localized to a fixed block subspace $X$. 

\begin{lemma}\label{lem:MA_local_diag}
	$(\MA_\kappa(\sigma\text{-centered}))$ Suppose that $X$ is a block subspace of $E$ and $\LC$ an almost disjoint family of block subspaces of $E$ such that each has infinite-dimensional intersection with $X$. If $|\LC|\leq\kappa$, then there is a block subspace $Y$ of $X$ almost disjoint from every element of $\LC$.
\end{lemma}

\begin{proof}
	We mimic the proof of Theorem \ref{thm:MA_a_large}. Define a poset $\P$ to be all pairs $(s,F)$ where $s$ is a finite normalized block sequence in $X$ and $F$ a finite subset of $\LC$. We order elements of $\P$ by $(s',F')\leq(s,F)$ if $s'\sqsupseteq s$, $F'\supseteq F$, and $\forall W\in F(\langle s'\rangle\cap W\subseteq\langle s\rangle)$. As before, $\P$ is $\sigma$-centered, and if $G$ is a filter in $\P$, we let $X_G=\langle\bigcup\{s:\exists F((s,F)\in G)\}\rangle$.
	
	If $W\in\LC$, then the set $D_W=\{(s,F)\in\P:W\in F\}$ is dense, and if $G\cap D_W\neq\emptyset$, then $X_G\cap W$ is finite dimensional. For $n\in\omega$, let $E_n=\{(s,F)\in\P:|s|\geq n\}$. In order to see that the sets $E_n$, as before we use Lemma \ref{lem:extend} to obtain an $M$ for which whenever $x>M$ and not in $\bigcup F$, $\langle s\concat x\rangle\cap W=\langle s\rangle\cap W$ for each of the finitely many $W\in F$. Then, for any such $x$ in $X$ (which can be found in $W\cap X$ for some $W\in\LC$), $(s\concat x,F)\leq(s,F)$.

	If $|\LC|\leq\kappa$, by $\MA_\kappa(\sigma\text{-centered})$, there is a filter $G\subseteq\P$ which meets the sets $D_W$ and $E_n$, for $W\in\LC$ and $n\in\omega$, so $Y=X_G$ is as desired.
\end{proof}

It will be useful to note that if $\LA\subseteq\LB$ are infinite almost disjoint families of subspaces, then $\LH(\LA)\subseteq\LH(\LB)$.

\begin{thm}\label{thm:full_mad}
	$(\MA(\sigma\text{-centered}))$\footnote{This result seems likely to be true under the weaker assumption that $\a_{\mathrm{vec},F}=\c$; the issue is that the relevant almost disjoint family of subspaces, namely those of the form $W\cap \langle X_\gamma\rangle$ for $W\in\LC$ in Case 2, need not be \emph{block}.} There is a full mad family of block subspaces.
\end{thm}

\begin{proof}
	We will define $\LA=\bigcup_{\alpha<\cc}\LA_\alpha$ via transfinite recursion on $\cc$. Enumerate by $\{X_\alpha:\alpha<\cc\}$ and $\{D_\alpha:\alpha<\cc\}$ all elements of $\bb^\infty(E)$ and subsets of $E$, respectively, ensuring that the enumeration $X_\alpha$ repeats each $X\in\bb^\infty(E)$ cofinally often. Fix a bijection $\langle\cdot,\cdot\rangle:\cc\times\cc\to\cc$. 
	
	Begin by letting $\LA_0$ be any countably infinite almost disjoint family of block subspaces. Given $\alpha<\cc$, suppose that for $\beta<\alpha$, $\LA_\beta$ has been defined to be an infinite almost disjoint family of block subspaces with size $\leq|\beta|+\aleph_0$, and that $\LA_\beta\subseteq\LA_\gamma$ for $\beta\leq\gamma<\alpha$. We define $\LA_\alpha$ as follows:
	
	Put $\LA_\alpha'=\bigcup_{\beta<\alpha}\LA_\beta$. If $\langle X_\alpha\rangle$ is almost disjoint from every element of $\LA_\alpha'$, then put $\LA_\alpha''=\LA_\alpha'\cup\{\langle X_\alpha\rangle\}$. If not, put $\LA_\alpha''=\LA_\alpha'$. Say $\alpha=\langle\gamma,\delta\rangle$. If $X_\gamma\notin\LH(\LA_\alpha'')$, then let $\LA_\alpha=\LA_\alpha''$. Otherwise, let $\LC$ be the collection of elements of $\LA_\alpha''$ with which $X_\gamma$ has infinite-dimensional intersection and consider the following cases:
	
	Case 1: There is a $Z\preceq X_\gamma$ such that $\langle Z\rangle$ is almost disjoint from each $Y\in\LC$ and is contained in $D_\delta$. In this case, let $\LB$ be a countably infinite almost disjoint family of infinite-dimensional subspaces below $Z$. Note that if $V\in\LB$ is compatible with some $Y\in\LA_\alpha''$, then $X_\gamma$ must be compatible with that $Y$, so $Y\in\LC$, but this yields a contradiction as $\langle Z\rangle$ must be almost disjoint from such a $Y$. Let $\LA_\alpha=\LA_\alpha''\cup\LB$, an almost disjoint family by the preceding argument. Then, $Z\in\LH(\LA_\alpha)$.
	
	Case 2: For every $Y\preceq X_\gamma$ such that $\langle Y\rangle$ is almost disjoint from every element of $\LC$, there is no $Z\preceq Y$ with $\langle Z\rangle\subseteq D_\delta$. Note that if this fails, we are in Case 1. As $|\LC|\leq|\alpha|+\aleph_0<\cc$, by $\MA(\sigma\text{-centered})$ and Lemma \ref{lem:MA_local_diag}, there is a $Y\preceq X_\gamma$ with $\langle Y\rangle$ almost disjoint from each element of $\LC$. Let $\LB$ be a countably infinite almost disjoint family below $Y$, and let $\LA_\alpha=\LA_\alpha''\cup\LB$, an almost disjoint family by the same argument as in Case 1. Then, $Y\in\LH(\LA_\alpha)$.
	
	We claim that $\LA=\bigcup_{\alpha<\cc}\LA_\alpha$ is as desired. Note that $\LH(\LA)=\bigcup_{\alpha<\cc}\LH(\LA_\alpha)$, as whenever $X\in\LH(\LA)$, a countably infinite subset of $\LA$ all compatible with $X$ must occur in some initial $\LA_\alpha$, as $\cf(\cc)>\aleph_0$. Clearly, $\LA$ is a mad family. To verify fullness, let $D\subseteq E$ and $X\in\LH(\LA)$, and suppose that for every $Y\in\LH(\LA)\restr X$, there is a $Z\preceq Y$ with $\langle Z\rangle\subseteq D$. We may take $\alpha<\cc$ large enough so that $\alpha=\langle\gamma,\delta\rangle$, $X=X_\gamma$, $D=D_\delta$, and $X_\gamma\in\LH(\LA_\alpha'')$, for $\LA_\alpha''$ as in the construction above. If Case 1 occurred for this $\alpha$, then there is a $Z\in\LH(\LA_\alpha)\restr X\subseteq\LH(\LA)\restr X$ with $\langle Z\rangle\subseteq D$. If Case 2 occurred for this $\alpha$, then there is an $Y\in\LH(\LA_\alpha)\restr X\subseteq\LH(\LA)\restr X$ having no $Z\preceq Y$ with $\langle Z\rangle\subseteq D$, contrary to assumption. Thus, there is a $Z\in\LH(\LA)\restr X$ with $\langle Z\rangle\subseteq D$, as required.
\end{proof}

The proof of Theorem \ref{thm:full_mad} can be adapted to show how to generically add a full mad family of block subspaces: Let $\P$ be the collection of all countably infinite almost disjoint families of block subspaces, ordered by reverse inclusion. It is easy to see that $\P$ is $\sigma$-closed and if $G$ is $\V$-generic for $\P$, then $\LG=\bigcup G$ is a mad family of block subspaces. The arguments in Cases 1 and 2 above show that, for $\LA\in\P$, $X\in\LH(\LA)$, and $D\subseteq E$, the set of all $\LB\in\P$ such that $\LH(\LB)$ ``witnesses fullness for $X$ and $D$'' is dense below $\LA$. In the language of \cite{MR3631288}, assuming $\MA(\sigma\text{-centered})$, full mad families of block subspaces \emph{exist generically}.

What can we say about analytic mad families of subspaces in the absence of fullness? For a family $\LH\subseteq\bb^\infty(E)$ and $X\in\LH$, the game $G_\LH[X]$ is the variant of $G[X]$ in which I is restricted to playing elements of $\LH\restr X$. A variant of Theorem \ref{thm:local_Rosendal}, Theorem 3.11.5 in \cite{Smythe_thesis}, can be used to obtain the following:

\begin{thm}
	Let $\LA$ be an infinite mad family of subspaces. If $\LA$ is analytic, then there is an $Y\in\LH(\LA)$ such that II has a strategy in $G_{\LH(\LA)}[Y]$ for playing into $\bar{\LA}$.
\end{thm}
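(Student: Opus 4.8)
The plan is to run the argument of Theorem \ref{thm:no_full_mad}, but since we are no longer assuming $\LA$ is full we cannot apply Theorem \ref{thm:local_Rosendal} directly; instead I would invoke its fullness-free variant, Theorem 3.11.5 in \cite{Smythe_thesis}. The appropriate form of that result for present purposes is: if $\LH\subseteq\bb^\infty(E)$ is a strategic family with the $(p)$-property and $\A\subseteq\bb^\infty(E)$ is analytic, then there is an $X\in\LH$ such that either (i) I has a strategy in $F[X]$ for playing into $\A^c$, or (ii) II has a strategy in the \emph{restricted} game $G_\LH[X]$ for playing into $\A$. The point is that giving up fullness costs exactly the strengthening of the Gowers game from $G_\LH$ to $G$ in the second alternative, while only the $(p)$-property and strategicity are needed as hypotheses on $\LH$.

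First I would check the hypotheses. By Lemma \ref{lem:mad_strat_p}, since $\LA$ is an infinite mad family, $\LH(\LA)$ is a strategic family with the $(p)$-property; and since $\LA$ is analytic, so is $\bar{\LA}$ (as observed just before Lemma \ref{lem:strats_H(A)}). Applying the variant with $\LH=\LH(\LA)$ and $\A=\bar{\LA}$ yields a $Y\in\LH(\LA)$ falling under case (i) or case (ii). To eliminate case (i), I would argue exactly as in Theorem \ref{thm:no_full_mad}: by Lemma \ref{lem:strats_H(A)}(b), maximality of $\LA$ gives II a strategy in $F[Y]$ for playing into $\bar{\LA}$. Playing that strategy for II against the hypothesized strategy for I in case (i) produces a single run of $F[Y]$ whose outcome would have to lie simultaneously in $\bar{\LA}$ and in $\bar{\LA}^c$, which is absurd. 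Hence case (ii) holds, which is precisely the asserted strategy for II in $G_{\LH(\LA)}[Y]$ playing into $\bar{\LA}$.

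The main obstacle is conceptual rather than computational: one must use the correct fullness-free dichotomy and understand why its conclusion cannot be upgraded. In Theorem \ref{thm:no_full_mad}, the analogous case (ii) was ruled out because Lemma \ref{lem:strats_H(A)}(a) supplies a strategy for I in the \emph{unrestricted} $G[Y]$ that plays into $\LH(\LA)$, which is disjoint from $\bar{\LA}$. In the restricted game $G_{\LH(\LA)}[Y]$, however, I is confined to moves in $\LH(\LA)\restr Y$, and the strategy from Lemma \ref{lem:strats_H(A)}(a) --- playing a block subspace of $\langle Y\rangle\cap Y_n$ on the $n$th move --- is \emph{not} legal there, since such a block subspace has infinite-dimensional intersection essentially only with $Y_n$ and so need not lie in $\LH(\LA)$. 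This is exactly why case (ii) survives and becomes the conclusion, and why the theorem is the natural weakening of Theorem \ref{thm:no_full_mad} in the absence of fullness. Beyond correctly importing Theorem 3.11.5, every remaining step is a routine adaptation of the earlier argument.
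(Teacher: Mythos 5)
Your proposal is correct and follows exactly the route the paper intends: the paper gives no written proof beyond citing Theorem 3.11.5 of \cite{Smythe_thesis} as the fullness-free dichotomy, and your argument --- verifying the $(p)$-property and strategicity of $\LH(\LA)$ via Lemma \ref{lem:mad_strat_p}, applying the variant dichotomy to the analytic set $\bar{\LA}$, and eliminating the first alternative by pitting I's strategy against II's strategy from Lemma \ref{lem:strats_H(A)}(b) in $F[Y]$ --- is precisely the intended completion. Your closing observation about why Lemma \ref{lem:strats_H(A)}(a) fails to rule out the restricted-game alternative is also consistent with the paper's subsequent discussion of +-strategic families.
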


Were $\LH(\LA)$ to be \emph{+-strategic}, that is, whenever $\alpha$ is a strategy for II in $G_{\LH(\LA)}[X]$, for some $X\in\LH(\LA)$, then there is an outcome of $\alpha$ in $\LH(\LA)$, then the conclusion of the above theorem would yield the desired contradiction. However, by Theorem 3.11.9 of \cite{Smythe_thesis}, this is equivalent to $\LH(\LA)$ being full. These observations suggests that full mad families of subspaces are analogous to \emph{+-Ramsey} mad families on $\omega$, as studied by Hru{\v{s}}{\'{a}}k in \cite{MR1802331} (see also \cite{MR3631288}).\footnote{A closer analogue to being +-Ramsey would replace player II with player I in the definition of +-strategic, however this does not seem relevant to the present situation.}

\section{Further remarks, conjectures and open questions}\label{sec:fin}

Many of the arguments above, particularly those dependent on Lemma \ref{lem:extend} or results from \cite{MR3685044}, depend on the subspaces involved being block subspaces. For this reason, we incorporated ``block'' into our definition of the cardinal $\a_{\mathrm{vec},F}$. It remains unclear whether this is necessary for our results.

\begin{ques}
	Given an infinite mad family of (arbitrary) infinite-dimensional subspaces, is there one of the same size consisting only of block subspaces? In particular, can we remove ``block'' from the definition of $\a_{\mathrm{vec},F}$?
\end{ques}

We have seen in Corollary \ref{cor:a<a_vec} that it is consistent that $\a<\a_{\mathrm{vec},F}$. As in \cite{MR3685044}, we also ask about the reverse inequality:

\begin{ques}
	Is $\a_{\mathrm{vec},F}<\a$ consistent with $\ZFC$?
\end{ques}

Given the results in \S\ref{sec:card_cons}, it would be interesting to further determine in which ``canonical models'' $\a_{\mathrm{vec},F}=\aleph_1$. In particular, as both $\a$ and $\mathrm{non}(\LM)$ are $\aleph_1$ in the Miller model (see \S11.9 in \cite{MR2768685}), we suspect that $\a_{\mathrm{vec},F}$ is as well. The paramterized $\diamondsuit$ principles of Moore, Hru\v{s}\'{a}k, and D\v{z}amonja \cite{MR2048518} provide a convenient way of isolating such results. For instance, it is shown in \cite{MR2048518} that $\diamondsuit(\b)$, which holds in the Cohen, Sacks, and random models, implies that $\a=\aleph_1$. By Corollary \ref{cor:a<a_vec}, this is not the case for mad families of block subspaces. We suspect instead that the ``correct'' $\diamondsuit$ principle for $\a_{\mathrm{vec},F}$ is $\diamondsuit(\omega^\omega,=^\infty)$ (cf.~Theorem 7.5 in \cite{MR2048518}):

\begin{conj}
	$\diamondsuit(\omega^\omega,=^\infty)$ implies that $\a_{\mathrm{vec},F}=\aleph_1$.
\end{conj}

As $\diamondsuit(\omega^\omega,=^\infty)$ holds in the Cohen and Sacks models, this would subsume Theorems \ref{thm:cohen} and \ref{thm:sacks}. Moreover, $\diamondsuit(\omega^\omega,=^\infty)$ implies that $\mathrm{non}(\LM)=\aleph_1$ and thus fails in the random model, consistent with Corollary \ref{cor:a<a_vec}.

None of the original results in this article have any dependence on $F$. What differences, if any, can arise from different choices of $F$? In particular:

\begin{ques}
	Is it consistent with $\ZFC$ that for some choice of fields $F$ and $K$ (e.g., $|F|=2$ and $K=\Q$) $\a_{\mathrm{vec},F}\neq\a_{\mathrm{vec},K}$?
\end{ques}

The main motivating question for \S\ref{sec:def} remains open:

\begin{ques}
	Does there exist an analytic mad family of subspaces of $E$?
\end{ques}

Since posing this question in an earlier version of this paper, it has been answered negatively by Horowitz and Shelah \cite{Horowitz_Shelah_no_mad_vec} in the special case when $|F|=2$. The work in \S\ref{sec:def} also raises the following:

\begin{ques}
	Must every mad family of subspaces be full? If not, does there exist (in $\ZFC$) a full mad family of subspaces?	
\end{ques}

This may be analogous to the existence (in $\ZFC$) of a +-Ramsey mad family on $\omega$, recently announced by Osvaldo Guzm\'{a}n-Gonz\'{a}lez \cite{guzman_ramsey_mad}.

\bibliography{/Users/iian/Dropbox/Mathematics/math_bib}{}
\bibliographystyle{abbrv}

\end{document}